\documentclass{amsart}
\usepackage{amsmath}
\usepackage{amsfonts}
\usepackage{graphics}
\usepackage{epsfig}
\usepackage{amssymb}
\usepackage{amscd}
\usepackage[all]{xy}
\usepackage{latexsym}
\usepackage{graphicx}
\usepackage{multirow}
\usepackage{geometry}
\usepackage{geometry}

%
\usepackage{color}

%

\theoremstyle{Theorem}
\newtheorem{theorem}{Theorem}[section]
\newtheorem{defn}[theorem]{Definition}
\newtheorem{thm}[theorem]{Theorem}

\theoremstyle{remark}
\newtheorem{lem}{\bf Lemma}[section]
\newtheorem{cor}{\bf Corollary}[section]

\newtheorem{prop}{\bf Proposition}[section]

\numberwithin{equation}{section} \numberwithin{figure}{section}

\renewcommand*{\to}{\rightarrow}
\renewcommand*{\bar}[1]{\overline{#1}}





\newcommand{\mb}[1]{\mathbb{#1}} 

\newcommand{\mc}[1]{\mathcal{#1}}

\newcommand{\mf}[1]{\mathbf{#1}}

\textwidth = 6in \oddsidemargin = 0.25in \evensidemargin = 0.25in
\textheight = 8.7in \topmargin= -0.2 in

\title{Asymptotic Expansion of the Gaussian Integral Operators on Riemannian submanifolds of $\mb R^{n}$}

\author{Jia-Ming (Frank) Liou}
\address{Department of Mathematics\\
National Cheng Kung University\\
No.1, University Road, Tainan City 701, Taiwan\\ fjmliou@mail.ncku.edu.tw}

\author{Chi-Chien Lu}
\address{Department of Mathematics\\
National Cheng Kung University\\
No.1, University Road, Tainan City 701, Taiwan\\ l18121030@gs.ncku.edu.tw}

\begin{document}\fontsize{12}{20pt}\selectfont
\maketitle

\begin{abstract}
The Gaussian integral operator arises naturally as a local Euclidean approximation of the heat semigroup on a Riemannian manifold and plays a pivotal role in the analysis of graph Laplacians, particularly within the frameworks of manifold learning and spectral graph theory. In this paper, we study the asymptotic behavior of the Gaussian integral operator on a smooth Riemannian submanifold \( M \subset \mathbb{R}^n \), focusing on its expansion as \( \varepsilon \to 0^+ \). Under the assumption that the input function is real analytic near a fixed point \( x \in M \), we derive a full asymptotic expansion of the operator and compute the first-order correction term explicitly in terms of the mean curvature vector and the scalar curvature of the submanifold. In particular, we apply our results to hypersurfaces in Euclidean space and investigate geometric conditions under which points exhibit \emph{equicurvature}.
\end{abstract}

\noindent\textbf{Keywords}: Gaussian integral operator, Gaussian kernel, Laplace–Beltrami operator, asymptotic expansion, mean curvature, scalar curvature, graph Laplacian, manifold learning, diffusion maps.

\section{Introduction}
The Gaussian integral operator plays a crucial role in the analysis of graph Laplacians, particularly in the fields of manifold learning and spectral graph theory. By examining the spectral properties of the graph Laplacian, one can effectively perform dimensionality reduction by embedding data into a lower-dimensional Euclidean space. For example, diffusion maps utilize the graph Laplacian to uncover intrinsic low-dimensional structures within high-dimensional datasets. For related works, see \cite{BN}, \cite{CS}, and \cite{CLL}.

In the context of manifold learning, the normalized graph Laplacian serves as a discrete analogue of the Laplace–Beltrami operator on a Riemannian manifold. A crucial step in establishing the convergence of normalized graph Laplacians to their continuous counterparts involves a detailed analysis of the Gaussian integral operator. This investigation is foundational to both the theoretical understanding and the practical implementation of these operators in analyzing the geometric structure of high-dimensional data. For studies on the convergence of the normalized graph Laplacian when data points lie on a submanifold \( M \subset \mathbb{R}^n \), see, for example, \cite{DB}, \cite{S1}, \cite{S2}, \cite{S3}, \cite{HAL}, \cite{BN1}, \cite{BN3}, \cite{SA}, \cite{CS}, etc.

Before introducing the definition of the Gaussian integral operator, we briefly recall its motivation in a broader analytical context. The origin of this operator lies in the theory of heat operators. The \emph{heat operator} \( \mathcal{H}_t = e^{t\Delta} \) on a \( d \)-dimensional compact, oriented Riemannian manifold\footnote{Throughout this paper, all manifolds are assumed to be connected.} \( M \), is defined as the one-parameter semigroup generated by the Laplace–Beltrami operator \( \Delta \). It can be expressed as a family of integral operators parameterized by time \( t > 0 \):
\[
(\mathcal{H}_t f)(x) = \int_M H(x, y, t) f(y) \, dV(y),
\]
where \( dV \) denotes the Riemannian volume form on \( M \), and \( H : M \times M \times [0, \infty) \to \mathbb{R} \) is the \emph{heat kernel}, which satisfies the heat equation
\[
\frac{\partial H}{\partial t} = \Delta_x H.
\]
When both the geodesic distance \( d(x, y) \) and the time \( t \) are sufficiently small, the heat kernel admits the following classical asymptotic expansion:
\[
H(x, y, t) = \frac{1}{(4\pi t)^{d/2}} \exp\left(-\frac{d(x, y)^2}{4t}\right) \left( u_0(x, y) + t u_1(x, y) + O(t^2) \right),
\]
where \( u_0(x, y) = 1 + O(d(x, y)^2) \).

In practical applications, particularly when \( M \) is viewed as a Riemannian submanifold of \( \mathbb{R}^n \), it is often more convenient to use the ambient Euclidean distance \( \|x - y\|_{\mathbb{R}^n} \) instead of the intrinsic geodesic distance \( d(x, y) \). This leads naturally to the formulation of the \emph{Gaussian integral operator}, constructed using a Gaussian kernel based on Euclidean distance. Specifically, the \emph{Gaussian kernel} \( k_{\varepsilon} : M \times M \to \mathbb{R} \) is defined by
\[
k_{\varepsilon}(x, y) = \frac{1}{(4\pi\varepsilon)^{d/2}} \exp\left(-\frac{\|y - x\|_{\mathbb{R}^n}^2}{4\varepsilon}\right),
\]
where \( \varepsilon > 0 \) is a small positive parameter.

For each \( \varepsilon > 0 \), we define the associated \emph{Gaussian integral operator}
\[
\mathcal{K}_\varepsilon : C^\infty(M) \to C^\infty(M),
\]
by
\[
(\mathcal{K}_\varepsilon f)(x) = \int_M k_\varepsilon(x, y) f(y) \, dV(y),
\]
where \( dV \) is the Riemannian volume form on \( M \).

Our investigation into the asymptotic behavior of \( \mathcal{K}_\varepsilon \) is motivated by its role in the convergence analysis of the normalized graph Laplacian to the Laplace–Beltrami operator on Riemannian submanifolds of Euclidean space. According to \cite{S2}, it was claimed that in unpublished notes, G.~Tokarev was the first to derive the leading two terms in the asymptotic expansion of the Gaussian integral operator. The works concerning the convergence of graph Laplacians can be viewed as early steps toward a systematic understanding of the asymptotic behavior of Gaussian integral operators.

In this paper, we aim to derive the \emph{asymptotic expansion} of \( (\mathcal{K}_\varepsilon f)(x) \) as \( \varepsilon \to 0^+ \), assuming that \( f \) is analytic in a neighborhood of \( x \in M \). Specifically, we seek a sequence of smooth functions \( \{a_n : M \to \mathbb{R}\}_{n=0}^{\infty} \) such that
\[
(\mathcal{K}_\varepsilon f)(x) \sim \sum_{n=0}^{\infty} a_n(x) \, \varepsilon^n,
\]
meaning that for each \( N \in \mathbb{N} \), the remainder satisfies
\[
(\mathcal{K}_\varepsilon f)(x) - \sum_{n=0}^{N} a_n(x) \, \varepsilon^n = o(\varepsilon^N) \quad \text{as } \varepsilon \to 0^+.
\]
The coefficients \( a_n(x) \) can be defined inductively by the recurrence
\[
a_{n+1}(x) := \lim_{\varepsilon \to 0^+} \frac{(\mathcal{K}_\varepsilon f)(x) - \sum_{k=0}^{n} a_k(x) \, \varepsilon^k}{\varepsilon^{n+1}},
\]
provided that the limit exists.

In Section~2, we analyze the relationship between Euclidean and geodesic distances for points on a submanifold of Euclidean space and revisit the Taylor expansion of the Riemannian volume form, following the work of A.~Gray~\cite{AG}.  
In Section~3, we compute the asymptotic expansion of the Gaussian integral operator and express the coefficient \( a_1 \) explicitly in terms of the mean curvature vector and scalar curvature.  
In Section~4, we apply the Gaussian integral operator to hypersurfaces in Euclidean space to investigate conditions under which points on a hypersurface exhibit \emph{equicurvature}. In particular, we classify the equicurved condition in the case where the hypersurface is a surface in \( \mathbb{R}^3 \). To the best of our knowledge, no explicit examples of compact equicurved hypersurfaces are known when \( d \geq 3 \). This observation suggests a natural and intriguing open problem in differential geometry.

\section{Geometric Expansions in Normal Coordinates}
In this section, we present the Taylor expansions of two essential quantities in normal coordinates required for computing the asymptotic expansion of the Gaussian integral operator. Specifically, we derive the Taylor expansion of the squared Euclidean distance function composed with the exponential map, as well as the expansion of the Riemannian volume form expressed in geodesic normal coordinates. We also obtain the corresponding expansions in geodesic polar coordinates, where the local geometry is characterized in terms of radial distance and directions on the unit sphere.

In \cite{S1} and \cite{S2}, O.~G. Smolyanov, H.~v.~Weizsäcker, and O.~Wittich prove the following result:

\begin{thm}[\cite{S1}, \cite{S2}]
Consider points \( x, y \in M \subset N \), where \( M \) is an isometrically embedded submanifold via the map \( \phi \). Let \( p \in U(x) \), where \( U(x) \subset L \) is sufficiently small so that \( x \) and \( y \) are joined by a unique minimizing geodesic \( \gamma_{xy}^{M} \) in \( M \) starting at \( x \). Assume \( \gamma_{xy}^{M} \) is parametrized by arc length. Then
\[
\lim_{d_{M}(x,y)\to 0}\frac{d_{N}(\phi(x),\phi(y))^{2}-d_{M}(x,y)^{2}}{d_{M}(x,y)^{4}} = -\frac{1}{12}\left\|\mathsf{B}_{\phi}(\dot{\gamma}_{xy}^{M},\dot{\gamma}_{xy}^{M})\right\|^{2},
\]
where \( \mathsf{B}_{\phi} \) denotes the second fundamental form of the embedding \( \phi : M \to N \).
\end{thm}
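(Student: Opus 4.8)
The plan is to reduce the distance comparison to a fourth-order Taylor expansion of the minimizing geodesic's image. Set $s := d_M(x,y)$, abbreviate $\gamma := \gamma_{xy}^M$, and let $c(s) := \phi(\gamma(s))$ be the unit-speed geodesic of $M$ pushed forward to $N$; since $\phi$ is an isometric embedding, $c$ has unit speed in $N$ and joins $\phi(x) = c(0)$ to $\phi(y) = c(s)$. (That $c$ has length $s$ already forces $d_N(\phi(x),\phi(y)) \le s$, so the limit must be nonpositive, consistent with the stated sign.) The quantity to understand is $d_N(c(0),c(s))^2$, and the natural device is to pass to Riemann normal coordinates $(y^1,\dots,y^m)$ of $N$ centered at $p := \phi(x)$, where the Gauss lemma gives the \emph{exact} identity $d_N(p,c(s))^2 = \sum_i (c^i(s))^2$. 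Computing the distance thus reduces to Taylor expanding the coordinate functions $c^i(s)$ about $s=0$ and collecting terms through order $s^4$.

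First I would record the low-order coefficients. Writing $a^i = \dot c^i(0)$, $b^i = \ddot c^i(0)$, and $w^i = \dddot c^i(0)$, and using $c^i(0) = 0$, the $s^2$, $s^3$, and $s^4$ coefficients of $\sum_i (c^i)^2$ are $\sum_i (a^i)^2$, $\sum_i a^i b^i$, and $\tfrac13\sum_i a^i w^i + \tfrac14\sum_i (b^i)^2$, respectively. The point is to reinterpret these flat coordinate sums geometrically. Because the Christoffel symbols of $N$ vanish at the center $p$ in normal coordinates, the ordinary acceleration $b^i$ equals the covariant acceleration $(\nabla^N_{\dot c}\dot c)^i(0)$; by the Gauss formula $\nabla^N_{\dot c}\dot c = d\phi(\nabla^M_{\dot\gamma}\dot\gamma) + \mathsf{B}_\phi(\dot\gamma,\dot\gamma)$, and since $\gamma$ is a geodesic of $M$ the tangential term vanishes, leaving $b = \mathsf{B}_\phi(\dot\gamma,\dot\gamma)$, a vector normal to $M$. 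Hence $\sum_i (a^i)^2 = \norm{\dot c(0)}^2 = 1$, while $\sum_i a^i b^i = \inn{\dot c(0)}{(\nabla^N_{\dot c}\dot c)(0)} = \tfrac12\tfrac{d}{ds}\norm{\dot c}^2\big|_{s=0} = 0$, so the $s^3$ term drops out, and $\sum_i (b^i)^2 = \norm{\mathsf{B}_\phi(\dot\gamma,\dot\gamma)}^2$.

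The delicate step, which I expect to be the main obstacle, is the $s^4$ term, specifically the passage from the coordinate third derivative $w^i = \dddot c^i(0)$ to a covariant expression. Expanding $(\nabla^N_{\dot c}\dot c)^i$ in coordinates and differentiating once more in $s$, the terms quadratic in $\Gamma$ vanish at $p$, but a term $(\partial_l\Gamma^i_{jk})(p)\,a^l a^j a^k$ survives. Here one invokes the defining property of normal coordinates: geodesics through $p$ are straight rays, so $\Gamma^i_{jk}(sv)\,v^j v^k \equiv 0$, whence the fully symmetric part of $\partial_l\Gamma^i_{jk}(p)$ vanishes; since $a^l a^j a^k$ is totally symmetric, the surviving term is zero and $w = (\nabla^N_{\dot c}\nabla^N_{\dot c}\dot c)(0)$. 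This is exactly what prevents the ambient curvature from contaminating the leading correction. Differentiating $\norm{\dot c}^2 \equiv 1$ twice then gives $\inn{\dot c}{\nabla^N_{\dot c}\nabla^N_{\dot c}\dot c} = -\norm{\nabla^N_{\dot c}\dot c}^2 = -\norm{\mathsf{B}_\phi(\dot\gamma,\dot\gamma)}^2$ at $s=0$, so $\sum_i a^i w^i = -\norm{\mathsf{B}_\phi(\dot\gamma,\dot\gamma)}^2$. Assembling the $s^4$ coefficient yields $\bigl(-\tfrac13 + \tfrac14\bigr)\norm{\mathsf{B}_\phi(\dot\gamma,\dot\gamma)}^2 = -\tfrac1{12}\norm{\mathsf{B}_\phi(\dot\gamma,\dot\gamma)}^2$, so that $d_N(\phi(x),\phi(y))^2 = s^2 - \tfrac1{12}\norm{\mathsf{B}_\phi(\dot\gamma,\dot\gamma)}^2 s^4 + O(s^5)$; dividing by $s^4 = d_M(x,y)^4$ and letting $d_M(x,y) \to 0$ gives the claim. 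Finally I would remark that when $N = \mathbb{R}^n$ all ambient Christoffel symbols vanish identically, so these covariant manipulations collapse to an elementary Taylor expansion of $\norm{c(s) - c(0)}^2$, which is the only case needed in the remainder of the paper.
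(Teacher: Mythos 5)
Your argument is correct, and it in fact proves more than the paper itself does: you establish the statement for a general ambient manifold $N$, whereas the paper only writes out a proof in the special case $N=\mathbb{R}^n$ (Lemma~\ref{tmetric} and its corollary) and defers the general case to \cite{S1,S2}. The paper's route is to set $\xi(t)=\gamma_{\mathbf{v}_x}(t)-x$ and compute $\frac{d^4}{dt^4}\langle\xi,\xi\rangle\big|_{t=0}=-2\|\mathbf{B}_x(\mathbf{v}_x,\mathbf{v}_x)\|^2$ by the Leibniz rule, using only that $\langle\gamma'',\gamma'\rangle\equiv 0$ along a geodesic and that $\gamma''(0)=\mathbf{B}_x(\mathbf{v}_x,\mathbf{v}_x)$; this is precisely the degenerate case you note at the end, where all ambient Christoffel symbols vanish and $d_N^2$ is literally $\|c(s)-c(0)\|^2$. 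What your proof adds are the two ingredients needed to run the same computation on a curved $N$: the exact identity $d_N(p,q)^2=\sum_i (y^i(q))^2$ inside a normal ball (Gauss lemma), and the vanishing of the totally symmetric part of $\partial_l\Gamma^i_{jk}(p)$ in normal coordinates, which is what lets you identify $\dddot c^{\,i}(0)$ with $(\nabla^N_{\dot c}\nabla^N_{\dot c}\dot c)^i(0)$ and keeps the ambient curvature out of the $s^4$ coefficient; your bookkeeping $\bigl(-\tfrac13+\tfrac14\bigr)\|\mathsf{B}_\phi(\dot\gamma,\dot\gamma)\|^2=-\tfrac1{12}\|\mathsf{B}_\phi(\dot\gamma,\dot\gamma)\|^2$ is right. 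The one point worth stating explicitly is that the $O(s^5)$ remainder is uniform in the direction $\dot\gamma_{xy}^M$ (by smoothness of all data on the unit tangent sphere), since the right-hand side of the limit varies with $y$; the paper's corollary carries the same implicit uniformity.
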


This identity is equivalent to the expansion:
\[
d_{N}(\phi(x),\phi(y))^{2} = d_{M}(x,y)^{2} - \frac{d_{M}(x,y)^{4}}{12} \left\|\mathsf{B}_{\phi}(\dot{\gamma}_{xy}^{M},\dot{\gamma}_{xy}^{M})\right\|^{2} + O(d_{M}(x,y)^{5}).
\]

Their proof uses the Taylor expansion of the Riemannian metric in geodesic normal coordinates to expand \( d_{N}(\cdot, \phi(x))^2 \) near \( x \). In this section, we provide a much simpler proof of this identity in the special case where the ambient manifold is \( \mathbb{R}^n \). To this end, we begin by reviewing some basic facts about the Taylor expansion of scalar functions. 

Recall that a function \( h : U \subset \mathbb{R}^d \to \mathbb{R} \), defined on an open neighborhood \( U \) of the origin, is said to be \emph{analytic at the origin} if there exists \( \delta > 0 \) such that its Taylor series expansion
\begin{equation}\label{t1}
h(\mathbf{s}) = \sum_{j=0}^\infty \frac{1}{j!} H_j(h)(\mathbf{s})
\end{equation}
converges on the open ball \( B_0(\delta) \subset \mathbb{R}^d \). Here, \( \mathbf{s} = (s_1, \dots, s_d) \) denotes the standard coordinates on \( \mathbb{R}^d \), and each term \( H_j(h) \) is given by
\[
H_j(h)(\mathbf{s}) = \sum_{|\alpha| = j} \binom{j}{\alpha} (D^\alpha h)(0) \, \mathbf{s}^\alpha,
\]
where \( \alpha = (\alpha_1, \dots, \alpha_d) \in \mathbb{Z}_{+}^d \) is a multi-index of total degree \( |\alpha| = \alpha_1 + \cdots + \alpha_d \), and
\[
\binom{j}{\alpha} = \frac{j!}{\alpha_1! \cdots \alpha_d!}, \quad
\mathbf{s}^\alpha = s_1^{\alpha_1} \cdots s_d^{\alpha_d}, \quad
D^\alpha h(0) = \left. \frac{\partial^{|\alpha|} h}{\partial s_1^{\alpha_1} \cdots \partial s_d^{\alpha_d}} \right|_{\mathbf{s} = 0}.
\]
Each \( H_j(h) \) is either zero or a homogeneous polynomial of degree \( j \). For notational convenience, we write \( H_j(h) = h_j \).

Now let \( \mathbf{s} = s \mathbf{v} \), where \( 0 < s < \delta \) and \( \mathbf{v} \in S^{d-1} \). Then the analyticity of \( h \), together with the homogeneity of each \( h_j \), implies that
\begin{equation}\label{t2}
h(s \mathbf{v}) = \sum_{j=0}^{\infty} \frac{s^j}{j!} h_j(\mathbf{v}).
\end{equation}
Consequently, the function \( h \), when restricted to the punctured ball \( B_0'(\delta) := B_0(\delta) \setminus \{0\} \), admits the expansion \eqref{t2} in polar coordinates \( (s, \mathbf{v}) \).

Let \( M \) be a Riemannian submanifold of dimension \( d \) in \( \mathbb{R}^n \), and let \( x \in M \) be a point. Denote by \( \operatorname{inj}_x(M) \) the injectivity radius of \( M \) at \( x \).
Choose an ordered orthonormal basis \( \{ (e_1)_x, \dots, (e_d)_x \} \) of the tangent space \( T_x M \). For any \( \delta > 0 \) such that \( \delta < \operatorname{inj}_x(M) \), define the map
\(
\mathbf{x} : B_0(\delta) \to M
\)
by
\[
\mathbf{x}(s_1, \dots, s_d) = \exp_x\left( \sum_{i=1}^d s_i (e_i)_x \right),
\]
where \( B_0(\delta) \subset \mathbb{R}^d \) denotes the open Euclidean ball of radius \( \delta \) centered at the origin.
The map \( \mathbf{x} \) is a diffeomorphism from \( B_0(\delta) \) onto its image \( \mathcal{B}_x(\delta) \subset M \), which is an open neighborhood of \( x \). We refer to the map \( \mathbf{x} \), or equivalently to the coordinate functions \( (s_1, \dots, s_d) \), as a system of \emph{geodesic normal coordinates} centered at \( x \). The image \( \mathcal{B}_x(\delta) \) is called the \emph{geodesic ball} of radius \( \delta \) centered at \( x \).
Note that the system of geodesic normal coordinates depends on the choice of the ordered orthonormal basis of \( T_x M \).

Let \( g : B_0(\delta) \to \mathbb{R} \) be the function defined by
\[
g(\mathbf{s}) = \| \mathbf{x}(\mathbf{s}) - x \|_{\mathbb{R}^n}^2,
\]
where \( \mathbf{x} : B_0(\delta) \to M \subset \mathbb{R}^n \) is the exponential coordinate chart centered at \( x \in M \). The function \( g \) is evidently smooth, and we further assume it is analytic on the open ball \( B_0(\delta) \). In what follows, we briefly outline the computation of the Taylor expansion of \( g \) about the origin. It is immediate that \( g(0) = 0 \), and the first-order partial derivatives are given by
\[
\frac{\partial g}{\partial s_i} = 2 \left\langle \frac{\partial \mathbf{x}}{\partial s_i}, \mathbf{x}(\mathbf{s}) - x \right\rangle,
\]
so in particular \( \frac{\partial g}{\partial s_i}(0) = 0 \) for all \( i \). Applying the product rule, we compute the second-order derivatives:
\begin{align*}
\frac{\partial^2 g}{\partial s_i \partial s_j} 
&= 2 \left\langle \frac{\partial^2 \mathbf{x}}{\partial s_i \partial s_j}, \mathbf{x}(\mathbf{s}) - x \right\rangle 
+ 2 \left\langle \frac{\partial \mathbf{x}}{\partial s_i}, \frac{\partial \mathbf{x}}{\partial s_j} \right\rangle \\
&= 2 \left\langle \frac{\partial^2 \mathbf{x}}{\partial s_i \partial s_j}, \mathbf{x}(\mathbf{s}) - x \right\rangle 
+ 2 g_{ij}(\mathbf{s}).
\end{align*}
Evaluating at \( \mathbf{s} = 0 \), we find
\[
\frac{\partial^2 g}{\partial s_i \partial s_j}(0) = 2 g_{ij}(0) = 2 \delta_{ij}.
\]
Furthermore, since the Christoffel symbols satisfy \( \Gamma_{ij}^{k}(x) = 0 \) at the center of normal coordinates, it follows that
\[
\frac{\partial^3 \mathbf{x}}{\partial s_i \partial s_j \partial s_k}(0) = 0.
\]
More generally, higher-order derivatives \( D^\alpha g(0) \) can be computed by repeated application of the product rule, taking into account the symmetries of the exponential map and the vanishing of certain derivatives at the origin. Consequently, the Taylor expansion of \( g(\mathbf{s}) \) near the origin begins as
\[
g(\mathbf{s}) = s^2 + O(s^4),
\]
where \( s = \sqrt{s_1^2 + \cdots + s_d^2} \). This expression reflects the fact that
\[
\| y - x \|_{\mathbb{R}^n}^2 = d(x, y)^2 + O(d(x, y)^4),
\]
for any point \( y \in \mathcal{B}_x(\delta) \), where \( d(x, y) \) denotes the intrinsic (geodesic) distance between \( x \) and \( y \). The fourth-order term can be expressed in terms of the second fundamental form via the Gauss equation. Details can be found in \cite{S1} and \cite{S2}.

We now present an alternative approach to derive the geometric expansion of the squared Euclidean norm. By restricting the function \( g \) to the punctured ball \( B_0'(\delta) \), and evaluating its Taylor expansion along each ray of the form \( \mathbf{s} = s \mathbf{v} \), we obtain the corresponding expansion in polar coordinates \( (s, \mathbf{v}) \):
\[
g(\mathbf{s}) = \sum_{j=0}^{\infty} \frac{s^j}{j!} \, g_j(\mathbf{v}),
\]
where each \( g_j(\mathbf{v}) \) is the homogeneous degree-\( j \) component of the Taylor series in the direction \( \mathbf{v} \in S^{d-1} \). From previous computations, we have
\[
g_0(\mathbf{v}) = 0, \quad g_1(\mathbf{v}) = 0, \quad g_2(\mathbf{v}) = 2, \quad g_3(\mathbf{v}) = 0.
\]
To compute the coefficients \( g_j \) more directly, we use the series expansion of \( g \) in polar coordinates. Specifically, for each fixed direction \( \mathbf{v} \in S^{d-1} \), we define the one-variable function
\[
g_{\mathbf{v}} : (-\delta, \delta) \to \mathbb{R}, \quad g_{\mathbf{v}}(t) := g(t \mathbf{v}),
\]
which corresponds to the restriction of \( g \) along the radial line in the direction \( \mathbf{v} \). Then, the coefficient \( g_j(\mathbf{v}) \) in the polar expansion of \( g \) is given by the \( j \)-th derivative of \( g_{\mathbf{v}} \) evaluated at the origin:
\[
g_j(\mathbf{v}) = \left. \frac{d^j}{dt^j} g_{\mathbf{v}}(t) \right|_{t = 0}.
\]

Let \( \mathbf{v} = (v_1, \dots, v_d) \in S^{d-1} \) be a unit vector. With respect to the chosen orthonormal basis \( \{ (\mathbf{e}_i)_x \}_{i=1}^d \) of \( T_x M \), we associate the corresponding tangent vector \( \mathbf{v}_x \in T_x M \) by
\(
\mathbf{v}_x = \sum_{i=1}^d v_i (\mathbf{e}_i)_x.
\)
Let
\[
\gamma_{\mathbf{v}_x} : (-\delta, \delta) \to \mathcal{B}_x(\delta)
\]
denote the unique geodesic satisfying the initial conditions
\[
\gamma_{\mathbf{v}_x}(0) = x, \quad \gamma_{\mathbf{v}_x}'(0) = \mathbf{v}_x.
\]
Then, for all \( |t| < \delta \), we have
\[
\gamma_{\mathbf{v}_x}(t) = \exp_x(t \mathbf{v}_x).
\] 
It follows from the definition that
\[
g_{\mathbf{v}}(t) = \| \gamma_{\mathbf{v}_{x}}(t) - x \|_{\mathbb{R}^n}^2.
\]
We now proceed to compute the Taylor expansion of \( g_{\mathbf{v}}(t) \) about \( t = 0 \). To express the derivatives of \( g_{\mathbf{v}}(t) \) at \( t = 0 \), we require the notion of the second fundamental form of a submanifold \( M \subset \mathbb{R}^n \). We briefly review its definition below.

Let \( \bar{\nabla} \) be the Riemannian connection on \( \mathbb{R}^n \). For each point \( x \in M \), we have the orthogonal direct sum decomposition
\[
T_x \mathbb{R}^n = T_x M \oplus T_x M^{\perp}.
\]
We denote the orthogonal projections from \( T_x \mathbb{R}^n \) onto \( T_x M \) and \( T_x M^{\perp} \) by \( T_x \) and \( \perp_x \), respectively.

Let \( X \) and \( Y \) be vector fields defined on an open neighborhood \( U \subset M \) of a point \( x \in M \). Suppose \( \bar{X} \) and \( \bar{Y} \) are smooth vector fields defined on an open neighborhood \( \bar{U} \subset \mathbb{R}^n \) of \( x \), with \( \bar{U} \cap M = U \), such that \( \bar{X} = X \) and \( \bar{Y} = Y \) on \( U \). Then, for any \( y \in U \), the Levi-Civita connection \( \nabla \) on \( M \) satisfies
\[
(\nabla_X Y)(y) = T_y \left( \bar{\nabla}_{\bar{X}} \bar{Y} (y) \right).
\]
We define the second fundamental form
\[
\mathbf{B}_x :\; T_x M \times T_x M \to T_x M^{\perp}
\]
by
\(
\mathbf{B}_x(X(x), Y(x)) =(\bar{\nabla}_{\bar{X}(x)} \bar{Y})^{\perp_x}.
\)
The following result provides the foundation for the expansion of \( g_{\mathbf{v}} \).
\begin{lem}\label{tmetric}
Let \( \mathbf{v} \in S^{d-1} \) be a unit vector. The function \( g_{\mathbf{v}} \) defined above admits a Taylor expansion around \( t = 0 \) of the form
\[
g_{\mathbf{v}}(t) = \sum_{j=0}^{\infty} \frac{g_j(\mathbf{v})}{j!} t^j, \quad \text{for } |t| < \delta.
\]
Each coefficient \( g_j(\mathbf{v}) \) is either zero or a homogeneous polynomial of degree \( j \) in \( \mathbf{v} = (v_1, \dots, v_d) \). The first few coefficients are given by
\[
g_0(\mathbf{v}) = 0, \quad 
g_1(\mathbf{v}) = 0, \quad 
g_2(\mathbf{v}) = 2, \quad 
g_3(\mathbf{v}) = 0, \quad 
g_4(\mathbf{v}) = -2 \| \mathbf{B}_x(\mathbf{v}_x, \mathbf{v}_x) \|^2,
\]
where \( \mathbf{B}_x(\mathbf{v}_x, \mathbf{v}_x) \) denotes the second fundamental form of the submanifold \( M \subset \mathbb{R}^n \) at the point \( x \), evaluated on the vector \( \mathbf{v}_x \in T_x M \).
\end{lem}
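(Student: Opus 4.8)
The plan is to work directly with the \( \mathbb{R}^n \)-valued curve \( \gamma := \gamma_{\mathbf{v}_x} \) and differentiate the scalar function \( g_{\mathbf{v}}(t) = \langle \gamma(t) - x, \gamma(t) - x \rangle \) term by term, using the geodesic equation to identify each derivative at \( t = 0 \) geometrically. First I would observe that the expansion form itself and the homogeneity of the \( g_j \) are immediate from the polar reformulation \eqref{t2}: since \( g \) is analytic, setting \( \mathbf{s} = t\mathbf{v} \) yields \( g_{\mathbf{v}}(t) = \sum_j \frac{t^j}{j!} g_j(\mathbf{v}) \) with each \( g_j \) the homogeneous degree-\( j \) component \( H_j(g) \) evaluated at \( \mathbf{v} \); equivalently, the scaling law \( \gamma_{c\mathbf{v}_x}(t) = \gamma_{\mathbf{v}_x}(ct) \) for geodesics forces \( g_j \) to be homogeneous of degree \( j \). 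So the real content lies in computing \( g_0, \dots, g_4 \).

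Next I would apply the Leibniz rule to \( g_{\mathbf{v}}(t) = \langle F(t), F(t) \rangle \) with \( F(t) = \gamma(t) - x \), noting \( F'(t) = \gamma'(t) \), to obtain
\begin{align*}
g_{\mathbf{v}}'(t) &= 2\langle \gamma', F \rangle, \\
g_{\mathbf{v}}''(t) &= 2\langle \gamma'', F \rangle + 2\langle \gamma', \gamma' \rangle, \\
g_{\mathbf{v}}'''(t) &= 2\langle \gamma''', F \rangle + 6\langle \gamma'', \gamma' \rangle, \\
g_{\mathbf{v}}^{(4)}(t) &= 2\langle \gamma^{(4)}, F \rangle + 8\langle \gamma''', \gamma' \rangle + 6\langle \gamma'', \gamma'' \rangle.
\end{align*}
At \( t = 0 \) we have \( F(0) = 0 \), so every term containing \( F \) vanishes. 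This gives \( g_0 = g_1 = 0 \) directly, and \( g_2 = 2\langle \gamma'(0), \gamma'(0) \rangle = 2\|\mathbf{v}_x\|^2 = 2 \) since \( \mathbf{v} \) is a unit vector.

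The geometric input enters through the geodesic equation. Because \( \gamma \) is a geodesic of \( M \), its tangential acceleration vanishes, so the full ambient second derivative is purely normal: \( \gamma''(t) = \mathbf{B}_{\gamma(t)}(\gamma'(t), \gamma'(t)) \perp T_{\gamma(t)}M \). Since \( \gamma'(t) \in T_{\gamma(t)}M \), this yields \( \langle \gamma''(t), \gamma'(t) \rangle \equiv 0 \), whence \( g_3 = 6\langle \gamma''(0), \gamma'(0) \rangle = 0 \). For the surviving cross term \( \langle \gamma'''(0), \gamma'(0) \rangle \) in \( g_4 \), rather than computing \( \gamma''' \) explicitly, I would differentiate the identity \( \langle \gamma''(t), \gamma'(t) \rangle \equiv 0 \) to get \( \langle \gamma'''(t), \gamma'(t) \rangle = -\|\gamma''(t)\|^2 \). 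Substituting \( \gamma''(0) = \mathbf{B}_x(\mathbf{v}_x, \mathbf{v}_x) \) then gives
\[
g_4 = 8\bigl(-\|\mathbf{B}_x(\mathbf{v}_x, \mathbf{v}_x)\|^2\bigr) + 6\|\mathbf{B}_x(\mathbf{v}_x, \mathbf{v}_x)\|^2 = -2\|\mathbf{B}_x(\mathbf{v}_x, \mathbf{v}_x)\|^2,
\]
as claimed.

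The main obstacle I anticipate is justifying the identity \( \gamma'' = \mathbf{B}_{\gamma}(\gamma', \gamma') \) cleanly from the definition of \( \mathbf{B}_x \) given above — that for a geodesic the ambient second derivative coincides with the normal-valued second fundamental form evaluated along the curve — and, relatedly, confirming that the differentiation trick legitimately eliminates the unknown \( \langle \gamma''', \gamma' \rangle \). Both rest on the same fact that \( \gamma'' \) is everywhere orthogonal to \( \gamma' \) along the geodesic; once this is in hand, everything else is routine bookkeeping with the Leibniz rule.
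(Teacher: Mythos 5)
Your proposal is correct and follows essentially the same route as the paper: Leibniz differentiation of \( \langle \gamma - x, \gamma - x \rangle \), the orthogonality \( \langle \gamma'', \gamma' \rangle \equiv 0 \) along the geodesic to kill the third-order term, differentiation of that identity to evaluate \( \langle \gamma''', \gamma' \rangle \), and the identification \( \gamma''(0) = \mathbf{B}_x(\mathbf{v}_x, \mathbf{v}_x) \). Your added remarks on homogeneity via the polar expansion and the geodesic scaling law are consistent with (and slightly more explicit than) the paper's treatment.
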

\begin{proof}
By definition, \( g_k(\mathbf{v}) = g_{\mathbf{v}}^{(k)}(0) \), so it suffices to compute the derivatives \( g_{\mathbf{v}}^{(k)}(0) \). Recall that
\[
g_{\mathbf{v}}(t) = \langle \gamma_{\mathbf{v}_{x}}(t) - x, \gamma_{\mathbf{v}_{x}}(t) - x \rangle_{\mathbb{R}^n}.
\]
Define \( \xi_{\mathbf{v}_{x}}(t) := \gamma_{\mathbf{v}_{x}}(t) - x \). Then \( \xi_{\mathbf{v}_{x}}(0) = 0 \), and \( \xi_{\mathbf{v}_{x}}^{(k)}(t) = \gamma_{\mathbf{v}_{x}}^{(k)}(t) \) for all \( k \geq 1 \). Since
\[
g_{\mathbf{v}_{x}}(t) = \langle \xi_{\mathbf{v}_{x}}(t), \xi_{\mathbf{v}_{x}}(t) \rangle,
\]
we apply the Leibniz rule to obtain
\[
g_{\mathbf{v}}^{(k)}(0) = \sum_{j=0}^{k} \binom{k}{j} \langle \xi_{\mathbf{v}_{x}}^{(j)}(0), \xi_{\mathbf{v}_{x}}^{(k-j)}(0) \rangle.
\]

The first few derivatives are:
\begin{align*}
g_{\mathbf{v}}'(0) &= 2 \langle \xi_{\mathbf{v}_{x}}(0), \xi_{\mathbf{v}_{x}}'(0) \rangle = 2 \langle 0, \mathbf{v}_{x} \rangle = 0, \\
g_{\mathbf{v}}''(0) &= 2 \| \gamma_{\mathbf{v}_{x}}'(0) \|^2 = 2 \| \mathbf{v}_{x} \|^2 = 2, \\
g_{\mathbf{v}}^{(3)}(0) &= 6 \langle \gamma_{\mathbf{v}_{x}}^{(2)}(0), \gamma_{\mathbf{v}_{x}}'(0) \rangle = 6 \langle \gamma_{\mathbf{v}_{x}}^{(2)}(0), \mathbf{v}_{x} \rangle.
\end{align*}

Since \( \gamma_{\mathbf{v}_{x}} \) is a geodesic in \( M \), it satisfies
\[
\langle \gamma_{\mathbf{v}_{x}}^{(2)}(t), \gamma_{\mathbf{v}_{x}}'(t) \rangle = 0 \quad \text{for all } t,
\]
so in particular,
\[
g_{\mathbf{v}}^{(3)}(0) = 6 \langle \gamma_{\mathbf{v}_{x}}^{(2)}(0), \mathbf{v}_{x} \rangle = 0.
\]

Differentiating the identity above with respect to \( t \), we obtain
\[
\langle \gamma_{\mathbf{v}_{x}}^{(3)}(t), \gamma_{\mathbf{v}_{x}}'(t) \rangle + \| \gamma_{\mathbf{v}_{x}}^{(2)}(t) \|^2 = 0.
\]
Evaluating at \( t = 0 \), we find
\[
\langle \gamma_{\mathbf{v}_{x}}^{(3)}(0), \mathbf{v}_{x} \rangle = - \| \gamma_{\mathbf{v}_{x}}^{(2)}(0) \|^2.
\]

Now we compute the fourth derivative:
\begin{align*}
g_{\mathbf{v}}^{(4)}(0)
&= 8 \langle \gamma_{\mathbf{v}_{x}}^{(3)}(0), \gamma_{\mathbf{v}_{x}}'(0) \rangle + 6 \| \gamma_{\mathbf{v}_{x}}^{(2)}(0) \|^2 \\
&= 8 \cdot \left( - \| \gamma_{\mathbf{v}_{x}}^{(2)}(0) \|^2 \right) + 6 \| \gamma_{\mathbf{v}_{x}}^{(2)}(0) \|^2 \\
&= -2 \| \gamma_{\mathbf{v}_{x}}^{(2)}(0) \|^2.
\end{align*}

Finally, using the fact that \( \gamma_{\mathbf{v}_{x}}^{(2)}(0) =\mf B_x(\mathbf{v}_{x}, \mathbf{v}_{x}) \), we obtain
\[
g_{\mathbf{v}}^{(4)}(0) = -2 \|\mf B_x(\mathbf{v}_{x}, \mathbf{v}_{x}) \|^2,
\]
as claimed.
\end{proof}

Let \( \mathcal{B}_x'(\delta) \) represent the punctured geodesic ball centered at \( x \), defined as follows:
\[
\mathcal{B}_x'(\delta) = \mathcal{B}_x(\delta) \setminus \{x\}.
\]
Building on the previous result, we now present the following corollary, which provides the expansion we aimed to demonstrate:
\begin{cor}
For every \( y \in \mathcal{B}_x'(\delta) \), the squared Euclidean distance can be expressed as:
\[
\| y - x \|_{\mathbb{R}^n}^2 = s^2 - \frac{s^4}{12} \left\| \mathsf{B}_x(\mathbf{v}_x, \mathbf{v}_x) \right\|^2 + O(s^5),
\]
where \( s = d(x, y) \) and \( \mathbf{v}_x \in T_x M \) is the unique unit tangent vector corresponding to the point \( y = \exp_x(s \mathbf{v}_x) \).
\end{cor}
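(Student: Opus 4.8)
The plan is to obtain the corollary as an immediate consequence of Lemma~\ref{tmetric}, by evaluating the radial Taylor expansion of $g_{\mathbf{v}}$ at the parameter value $t = s$. First I would note that any $y \in \mathcal{B}_x'(\delta)$ can be written uniquely as $y = \exp_x(s\,\mathbf{v}_x)$ with $s \in (0,\delta)$ and $\mathbf{v}_x \in T_xM$ a unit vector: this follows because $\mathbf{x} = \exp_x$ is a diffeomorphism from $B_0(\delta)$ onto $\mathcal{B}_x(\delta)$ and $\delta < \operatorname{inj}_x(M)$. Since $\gamma_{\mathbf{v}_x}(t) = \exp_x(t\,\mathbf{v}_x)$ is the unit-speed radial geodesic issuing from $x$ and $\delta$ is smaller than the injectivity radius, this geodesic is minimizing on $[0,s]$, so its arc length gives $d(x,y) = s$. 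This is the step that identifies the geodesic parameter $t$ with the intrinsic distance, and it is the only geometric input beyond the lemma.

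With this identification in hand, the definition of $g_{\mathbf{v}}$ gives
\[
\| y - x \|_{\mathbb{R}^n}^2 = \| \gamma_{\mathbf{v}_x}(s) - x \|_{\mathbb{R}^n}^2 = g_{\mathbf{v}}(s).
\]
I would then substitute the coefficients computed in Lemma~\ref{tmetric} into the Taylor series
\[
g_{\mathbf{v}}(s) = \frac{g_2(\mathbf{v})}{2!}\,s^2 + \frac{g_3(\mathbf{v})}{3!}\,s^3 + \frac{g_4(\mathbf{v})}{4!}\,s^4 + O(s^5).
\]
Using $g_2(\mathbf{v}) = 2$, $g_3(\mathbf{v}) = 0$, and $g_4(\mathbf{v}) = -2\,\|\mathbf{B}_x(\mathbf{v}_x,\mathbf{v}_x)\|^2$, the quadratic term contributes $s^2$, the cubic term vanishes, and the quartic term contributes $-\tfrac{2}{24}\,\|\mathbf{B}_x(\mathbf{v}_x,\mathbf{v}_x)\|^2\,s^4 = -\tfrac{1}{12}\,\|\mathbf{B}_x(\mathbf{v}_x,\mathbf{v}_x)\|^2\,s^4$, which is exactly the claimed expansion (with $\mathbf{B}_x$ the same second fundamental form written $\mathsf{B}_x$ in the statement).

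There is no genuine analytic obstacle remaining, since all the difficult derivative computations were already carried out in the lemma; the only points requiring care are bookkeeping ones. In particular, I would make sure the remainder is $O(s^5)$ with an implied constant independent of the direction $\mathbf{v}$: this follows from the compactness of $S^{d-1}$ together with the analyticity of $g$ on $B_0(\delta)$, which bounds the degree-five Taylor coefficient $g_5(\mathbf{v})$ uniformly in $\mathbf{v}$. I would also remark that $\|y-x\|_{\mathbb{R}^n}^2$ depends on $y$ only through the polar pair $(s,\mathbf{v})$, so once the radial parameter is identified with $d(x,y)$ the expansion is well posed and the corollary reduces to the substitution above.
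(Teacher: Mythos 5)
Your proposal is correct and follows exactly the route the paper intends: the corollary is stated as an immediate consequence of Lemma~\ref{tmetric}, obtained by identifying the radial parameter $t$ with the geodesic distance $s=d(x,y)$ and substituting $g_2=2$, $g_3=0$, $g_4=-2\|\mathbf{B}_x(\mathbf{v}_x,\mathbf{v}_x)\|^2$ into the Taylor expansion of $g_{\mathbf{v}}$. Your added remarks on the uniqueness of the polar representation and the uniformity of the $O(s^5)$ remainder in $\mathbf{v}$ are sound and, if anything, slightly more careful than the paper, which omits them.
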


We now recall a result from \cite{AG} concerning the Taylor expansion of the volume form on a Riemannian manifold. To state this result precisely, we first introduce the necessary terminology.

Let \( \nabla \) denote the Levi-Civita connection on the Riemannian manifold \( M \). For smooth vector fields \( X \) and \( Y \), the Riemann curvature operator \( R_{XY} \) is defined by
\[
R_{XY} := \nabla_{[X, Y]} - [\nabla_X, \nabla_Y],
\]
where \( [X, Y] \) denotes the Lie bracket of \( X \) and \( Y \), and \( [\nabla_X, \nabla_Y] := \nabla_X \nabla_Y - \nabla_Y \nabla_X \) is the commutator of covariant derivatives.

Let \( Y_1, \ldots, Y_m \) be smooth vector fields on \( M \), and let \( Z \) denote a tensor field on \( M \). The \( m \)-th covariant derivative of \( Z \) along the vector fields \( Y_1, \ldots, Y_m \) is defined inductively by:
\[
\nabla_{Y_1 \cdots Y_m}^m Z = \nabla_{Y_m} \left( \nabla_{Y_1 \cdots Y_{m-1}}^{m-1} Z \right),
\]
with the base case given by \( \nabla_Y^1 Z = \nabla_Y Z \).

We now fix normal coordinates $\mf x:B_{0}(\delta)\to\mc B_{x}(\delta)$, as previously described, and we define the associated local frame fields:
\[
X_i = \frac{\partial}{\partial s_i}, \quad \text{for } 1 \leq i \leq d.
\]
The collection \( \{X_i\}_{i=1}^d \) forms a smooth local frame in the geodesic ball \( \mathcal{B}_x(\delta) \). For convenience, we adopt the following notation:
\[
\nabla_{i_1 \cdots i_k}^k = \nabla_{X_{i_1} \cdots X_{i_k}}^k, \qquad 
R_{ijkl} = \langle R_{X_i X_j} X_k, X_l \rangle, \qquad 
R_{ij} = \sum_{k=1}^d R_{ikjk},
\]
where \( 1 \leq i, j \leq d \) and \( 1 \leq i_1, \ldots, i_k \leq d \).

The pullback of the Riemannian volume form \( dV \) under the normal coordinate parametrization
\(
\mathbf{x} : B_0(\delta) \to \mathcal{B}_x(\delta)
\)
can be expressed as
\[
\mathbf{x}^* dV = \rho(\mathbf{s}) \, ds_1 \wedge \cdots \wedge ds_d,
\]
where \( \rho(\mathbf{s}) \) is a smooth positive function representing the volume density in normal coordinates. The following result is attributed to Alfred Gray:

\begin{thm}\label{g1}
In a neighborhood of \( 0 \in \mathbb{R}^d \), the volume density function \(\rho(s_1, \dots, s_d) \), defined by the pullback of the Riemannian volume form under normal coordinates centered at \( x \in M \), admits the Taylor expansion:
\[
\rho(\mathbf{s}) = \sum_{k=0}^{\infty} \frac{1}{k!} \rho_k(\mathbf{s}),
\]
where each \( \rho_k \) is either zero or a homogeneous polynomial of degree \( k \) in \( \mathbf{s} = (s_1, \dots, s_d) \). The first few coefficients are given by:
\[
\rho_0(\mathbf{s}) = 1, \quad 
\rho_1(\mathbf{s}) = 0, \quad 
\rho_2(\mathbf{s}) = - \frac{1}{3}\sum_{i,j=1}^{d} R_{ij}(x) s_i s_j,
\]
\[
\rho_3(\mathbf{s}) = -\frac{1}{2} \sum_{i,j,k=1}^{d} (\nabla_i R_{jk})(x) s_i s_j s_k,
\]
\[
\rho_4(\mathbf{s}) = \sum_{i,j,k,l=1}^{d} \left(
    -\frac{3}{5} \nabla^2_{ij} R_{kl}(x) 
    + \frac{1}{3} R_{ij}(x) R_{kl}(x) 
    - \frac{2}{15} \sum_{a,b=1}^{d} R_{iajb}(x) R_{kalb}(x)
\right) s_i s_j s_k s_l.
\]
Here, \( R_{ij}(x) \) denotes the Ricci curvature at \( x \), \( \nabla_i R_{jk}(x) \) denotes the covariant derivative of the Ricci tensor, and \( R_{iajb}(x) \) are the components of the Riemann curvature tensor at \( x \) in normal coordinates.
\end{thm}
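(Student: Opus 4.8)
The plan is to reduce the statement to a one-variable problem along each radial geodesic, in the same spirit as the proof of Lemma~\ref{tmetric}, rather than reproducing Gray's metric-comparison argument. Since $\rho$ is assumed analytic near the origin, it suffices, by the homogeneity argument behind \eqref{t2}, to compute the Taylor coefficients of the single-variable function $s \mapsto \rho(s\mathbf{v})$ for each fixed direction $\mathbf{v} \in S^{d-1}$; the coefficients produced this way are homogeneous polynomials in $\mathbf{v}$, and these then extend to the homogeneous polynomials $\rho_k(\mathbf{s})$ in the statement. Writing $\rho = \sqrt{\det(g_{ij})}$, the key observation is that the coordinate fields $X_i = \partial/\partial s_i$, pushed forward by $\mathbf{x}$, are up to the factor $1/s$ Jacobi fields along the radial geodesic $\gamma(t) = \exp_x(t\mathbf{v}_x)$. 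Concretely, if $Y_i$ denotes the Jacobi field with $Y_i(0)=0$ and $Y_i'(0)=(e_i)_x$, then $\tfrac{\partial \mathbf{x}}{\partial s_i}(s\mathbf{v}) = \tfrac{1}{s}Y_i(s)$, so that $g_{ij}(s\mathbf{v}) = \tfrac{1}{s^2}\inn{Y_i(s)}{Y_j(s)}$ and hence $\rho(s\mathbf{v}) = s^{-d}\sqrt{\det\inn{Y_i(s)}{Y_j(s)}}$.

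First I would express the Jacobi fields in a parallel orthonormal frame $\{E_j(t)\}$ along $\gamma$, writing $Y_i(t) = \sum_j A_{ij}(t)E_j(t)$. The Jacobi equation becomes the matrix ODE $A'' + A\,\mathcal{R}(t) = 0$, where $\mathcal{R}(t)_{ij} = \inn{R(E_i(t),\dot\gamma)\dot\gamma}{E_j(t)}$ is the symmetric tidal operator (with the sign fixed by the Jacobi equation, so that $\tr\mathcal{R}_0 = \Ric(\mathbf{v},\mathbf{v}) = \sum_{ij}R_{ij}(x)v_iv_j$), subject to $A(0)=0$, $A'(0)=I$. Since $\rho(s\mathbf{v}) = s^{-d}\lvert\det A(s)\rvert = \det\bigl(A(s)/s\bigr)$, the entire expansion is governed by solving this ODE as a formal power series. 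Because the $E_j$ and $\dot\gamma$ are parallel, the Taylor coefficients of $\mathcal{R}(t)$ at $t=0$ are covariant derivatives of the curvature evaluated on $\mathbf{v}$: writing $\mathcal{R}(t)=\mathcal{R}_0+\mathcal{R}_1 t + \mathcal{R}_2 t^2 + \cdots$, one has $\tr\mathcal{R}_0 = \Ric(\mathbf{v},\mathbf{v})$, $\tr\mathcal{R}_1 = (\nabla_{\mathbf{v}}\Ric)(\mathbf{v},\mathbf{v})$, and $\tr\mathcal{R}_2 = \tfrac12(\nabla^2_{\mathbf{v}\mathbf{v}}\Ric)(\mathbf{v},\mathbf{v})$. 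Matching powers in $A''+A\mathcal{R}=0$ gives $A(t) = tI - \tfrac16\mathcal{R}_0 t^3 - \tfrac{1}{12}\mathcal{R}_1 t^4 + \bigl(-\tfrac{1}{20}\mathcal{R}_2 + \tfrac{1}{120}\mathcal{R}_0^2\bigr)t^5 + \cdots$.

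Next I would expand $\det\bigl(A(s)/s\bigr) = \det(I+C)$, where $C = B_2 s^2 + B_3 s^3 + B_4 s^4 + \cdots$ with $B_2 = -\tfrac16\mathcal{R}_0$, $B_3 = -\tfrac{1}{12}\mathcal{R}_1$, and $B_4 = -\tfrac{1}{20}\mathcal{R}_2 + \tfrac{1}{120}\mathcal{R}_0^2$, using the elementary-symmetric-function identity $\det(I+C) = 1 + \tr C + \tfrac12\bigl((\tr C)^2 - \tr C^2\bigr) + \cdots$ and reading off each homogeneous degree. The degree-two and degree-three coefficients come out immediately: the $s^2$ term is $\tr B_2 = -\tfrac16\Ric(\mathbf{v},\mathbf{v})$, matching $\tfrac{1}{2!}\rho_2$, and the $s^3$ term is $\tr B_3 = -\tfrac{1}{12}(\nabla_{\mathbf{v}}\Ric)(\mathbf{v},\mathbf{v})$, matching $\tfrac{1}{3!}\rho_3$. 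Comparing with $\rho(s\mathbf{v}) = \sum_k \tfrac{1}{k!}\rho_k(\mathbf{v})\,s^k$ then recovers $\rho_2$ and $\rho_3$ exactly, after restoring the symmetric tensor form from the direction-dependent contraction.

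The hard part will be the degree-four coefficient $\rho_4$, where three contributions collide at order $s^4$: the term $\tr B_4$ containing $\tr\mathcal{R}_2$ (producing $\nabla^2\Ric$) and $\tr\mathcal{R}_0^2$, the correction $-\tfrac12\tr C^2 = -\tfrac12\tr B_2^2$ (a second $\tr\mathcal{R}_0^2$ contribution), and $\tfrac12(\tr C)^2 = \tfrac12(\tr B_2)^2$ (producing $\Ric^2$). Collecting these gives the $s^4$ coefficient $-\tfrac{1}{20}\tr\mathcal{R}_2 - \tfrac{1}{180}\tr\mathcal{R}_0^2 + \tfrac{1}{72}(\tr\mathcal{R}_0)^2$, and multiplying by $4!$ should recover $\rho_4$. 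I expect the delicate points to be: (i) keeping the Taylor normalization straight — the factor $\tfrac12$ hidden in $\tr\mathcal{R}_2$ is precisely what turns a naive $-\tfrac65$ into the stated $-\tfrac35$ in front of $\nabla^2_{ij}R_{kl}$; (ii) rewriting the matrix trace $\tr\mathcal{R}_0^2 = \sum_{ij}\inn{R(e_i,\mathbf{v}_x)\mathbf{v}_x}{e_j}\inn{R(e_j,\mathbf{v}_x)\mathbf{v}_x}{e_i}$ in terms of the fully indexed contraction $\sum_{a,b}R_{iajb}R_{kalb}$ via the symmetries of the Riemann tensor; and (iii) passing from these direction-dependent polynomials in $\mathbf{v}$ back to the totally symmetric tensor coefficients in $\mathbf{s}$, which is where the combinatorial constants $\tfrac35$, $\tfrac13$, and $\tfrac{2}{15}$ ultimately originate. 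Everything below degree four is forced by the same mechanism and requires no genuinely new input, so the whole computational weight of the theorem sits in this final symmetrization.
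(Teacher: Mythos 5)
Your proposal is correct, and all of its numerical coefficients check out against the statement: the matrix Jacobi equation $A''+A\mathcal{R}=0$ with $A(0)=0$, $A'(0)=I$ gives $A(t)=tI-\tfrac16\mathcal{R}_0t^3-\tfrac1{12}\mathcal{R}_1t^4+(-\tfrac1{20}\mathcal{R}_2+\tfrac1{120}\mathcal{R}_0^2)t^5+\cdots$, and expanding $\det(A(s)/s)$ yields the $s^4$ coefficient $-\tfrac1{20}\tr\mathcal{R}_2-\tfrac1{180}\tr\mathcal{R}_0^2+\tfrac1{72}(\tr\mathcal{R}_0)^2$, which after inserting $\tr\mathcal{R}_2=\tfrac12(\nabla^2_{\mathbf{v}\mathbf{v}}\operatorname{Ric})(\mathbf{v},\mathbf{v})$ reproduces exactly $\tfrac1{4!}\rho_4(\mathbf{v})$ with the constants $-\tfrac35$, $\tfrac13$, $-\tfrac2{15}$; the degree-two and degree-three terms match likewise. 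The comparison with the paper is, however, somewhat one-sided: the paper gives no proof of this theorem at all, citing it to Gray's 1973 article, so you have supplied an argument where the authors supply none. Your route also differs from Gray's original derivation, which proceeds by Taylor-expanding the metric components $g_{ij}$ in normal coordinates via curvature identities and then expanding $\sqrt{\det(g_{ij})}$; the Jacobi-field formulation packages that same information into a single matrix ODE along each radial geodesic, which makes it structurally clear why only the curvature and its covariant derivatives at $x$ can appear, and it meshes well with the paper's own polar-coordinate framework (the expansion $\rho(s\mathbf{v})=\sum_j\tfrac{s^j}{j!}\rho_j(\mathbf{v})$ used in the subsequent corollary). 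The only step you leave implicit, passing from the homogeneous polynomial in $\mathbf{v}$ back to the symmetric tensor coefficients in $\mathbf{s}$, is harmless, since a homogeneous polynomial determines its symmetric coefficient tensor uniquely by polarization and the theorem's coefficients are only ever contracted against $s_is_js_ks_l$.
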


Let \( P : (0, \delta) \times S^{d-1} \to B_0(\delta) \) denote the standard polar coordinate map in \( \mathbb{R}^d \), defined by
\(
P(s, \mathbf{v}) = s \mathbf{v},
\)
and
\[
\mathbf{y} : (0, \delta) \times S^{d-1} \to \mathcal{B}_x'(\delta)
\]
be the map defined by
\(
\mathbf{y} := \mathbf{x} \circ P.
\)
We refer to \( \mathbf{y} \) as the system of \emph{geodesic polar coordinates} on the punctured geodesic ball
\(
\mathcal{B}_x'(\delta).
\)  
Since
\(
\mathbf{y}^* dV = P^* (\mathbf{x}^* dV),
\)
and \( P(s, \mathbf{v}) = s \mathbf{v} \) is the standard polar coordinate map on \( \mathbb{R}^d \), we can express the pullback of the Riemannian volume form \( dV \) under \( \mathbf{y} \) using the Jacobian determinant of polar coordinates. This yields:
\[
\mathbf{y}^* dV = \rho(s \mathbf{v}) \, s^{d-1} \, ds \wedge d\sigma(\mathbf{v}),
\]
where \( d\sigma(\mathbf{v}) \) denotes the standard volume form on the unit sphere \( S^{d-1} \).Using the Taylor expansion of \( \rho \) and its corresponding representation in polar coordinates, we obtain the following result (assuming \( \rho \) is analytic):

\begin{cor}
The pullback of the volume form \( \mathbf{y}^* dV \) under geodesic polar coordinates admits the expansion
\[
\mathbf{y}^* dV = \left( \sum_{j=0}^{\infty} \frac{s^j}{j!} \rho_j(\mathbf{v}) \right) s^{d-1} \, ds \wedge d\sigma(\mathbf{v}),
\]
where each function \( \rho_j(\mathbf{v}) \) denotes the restriction of the homogeneous polynomial \( \rho_j(\mathbf{s}) \) of degree \( j \) to the unit sphere \( S^{d-1} \).
\end{cor}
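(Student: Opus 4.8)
The plan is to substitute the polar representation $\mathbf{s} = s\mathbf{v}$ directly into the Taylor expansion of the volume density $\rho$ furnished by Theorem~\ref{g1}, exploiting the homogeneity of each coefficient $\rho_k$. This is the exact analogue of the passage from \eqref{t1} to \eqref{t2} carried out earlier for a general analytic function $h$, now specialized to $\rho$.

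First I would recall the identity already established immediately before the statement, namely
\[
\mathbf{y}^* dV = \rho(s\mathbf{v}) \, s^{d-1} \, ds \wedge d\sigma(\mathbf{v}),
\]
so that the only remaining task is to expand the scalar quantity $\rho(s\mathbf{v})$ as a power series in $s$. By Theorem~\ref{g1}, $\rho$ is analytic near the origin with Taylor expansion $\rho(\mathbf{s}) = \sum_{k=0}^{\infty} \frac{1}{k!}\rho_k(\mathbf{s})$, where each $\rho_k$ is either zero or a homogeneous polynomial of degree $k$ in $\mathbf{s}$.

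The key step is the homogeneity relation $\rho_k(s\mathbf{v}) = s^k \rho_k(\mathbf{v})$, valid for $\mathbf{v} \in S^{d-1}$ and $0 < s < \delta$, where on the right-hand side $\rho_k(\mathbf{v})$ denotes the restriction of the degree-$k$ polynomial to the unit sphere. Substituting $\mathbf{s} = s\mathbf{v}$ and applying this relation termwise yields
\[
\rho(s\mathbf{v}) = \sum_{k=0}^{\infty} \frac{1}{k!}\rho_k(s\mathbf{v}) = \sum_{k=0}^{\infty} \frac{s^k}{k!}\rho_k(\mathbf{v}),
\]
and inserting this into the displayed formula for $\mathbf{y}^* dV$ produces the claimed expansion.

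The only point requiring care --- and the closest thing to an obstacle --- is the justification for interchanging evaluation at $\mathbf{s} = s\mathbf{v}$ with the infinite sum, that is, the termwise legitimacy of the regrouping into powers of $s$. This is precisely where the standing analyticity hypothesis on $\rho$ enters: since the Taylor series of $\rho$ converges on $B_0(\delta)$, it converges at every point $s\mathbf{v}$ with $0 < s < \delta$, and the homogeneity of each $\rho_k$ guarantees that collecting terms by degree reproduces a convergent power series in $s$. As this is identical to the reasoning used to pass from \eqref{t1} to \eqref{t2}, no genuinely new difficulty arises, and the corollary follows at once.
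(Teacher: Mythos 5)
Your argument is correct and matches the paper's (essentially unstated) proof exactly: the paper derives the corollary by combining the identity $\mathbf{y}^* dV = \rho(s\mathbf{v})\, s^{d-1}\, ds \wedge d\sigma(\mathbf{v})$ with the general passage from \eqref{t1} to \eqref{t2} for an analytic function, applied to $\rho$ under the standing analyticity assumption. Nothing further is needed.
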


\section{Asymptotic Expansion of the Gaussian integral operator}
Let \( M \) be a \( d \)-dimensional compact oriented Riemannian submanifold of \( \mathbb{R}^n \), and let \( x \in M \). Choose \( \delta > 0 \) such that \( \delta < \operatorname{inj}_x(M) \). Denote by \( \mathcal{B}_x(\delta) \) the geodesic ball centered at \( x \) with radius \( \delta \), as defined in the previous section.

Since \( M \setminus \mathcal{B}_x(\delta) \) is compact, there exists a point \( y_\delta \in M \setminus \mathcal{B}_x(\delta) \) such that
\[
\| y_\delta - x \| = \inf \{ \| y - x \| : y \in M \setminus \mathcal{B}_x(\delta) \}.
\]
We define \( m_\delta := \| y_\delta - x \| \), which satisfies \( m_\delta > 0 \) by construction.

For each function \( f \in C^\infty(M) \), the Gaussian kernel operator \( \mathcal{K}_\varepsilon \) can be decomposed as
\[
\mathcal{K}_\varepsilon f(x) = \int_{\mathcal{B}_x(\delta)} k_\varepsilon(x, y) f(y) \, dV_y + \int_{M \setminus \mathcal{B}_x(\delta)} k_\varepsilon(x, y) f(y) \, dV_y.
\]
To estimate the second term, observe that
\[
\left| \int_{M \setminus \mathcal{B}_x(\delta)} k_\varepsilon(x, y) f(y) \, dV_y \right| \leq (4\pi\varepsilon)^{-\frac{d}{2}}e^{-\frac{m_\delta^{2}}{4\varepsilon}} \operatorname{Vol}(M) \| f \|_\infty,
\]
where \( \operatorname{Vol}(M) \) denotes the total volume of \( M \). Since 
\[
\lim_{\varepsilon \to 0^+} \frac{(4\pi\varepsilon)^{-\frac{d}{2}}e^{-\frac{m_\delta^{2}}{4\varepsilon}}}{\varepsilon^k} = 0 \quad \text{for all } k \in \mathbb{N},
\]
it then follows that
\[
\int_{M \setminus \mathcal{B}_x(\delta)} k_\varepsilon(x, y) f(y) \, dV_y = o(\varepsilon^k)
\quad \text{for all } k \in \mathbb{N}.
\]

To estimate the first term, we rewrite the integral using geodesic normal coordinates 
\( \mathbf{x} : B_0(\delta) \to \mathcal{B}_x(\delta) \). This gives
\[
\int_{\mathcal{B}_x(\delta)} k_\varepsilon(x, y) f(y) \, dV_y 
= (4\pi\varepsilon)^{-\frac{d}{2}} \int_{B_0(\delta)} e^{-\frac{\| \mathbf{x}(\mathbf{s}) - x \|^2}{4\varepsilon}} 
f(\mathbf{x}(\mathbf{s})) \, \mathbf{x}^* dV_y.
\]
We now further assume that both the function \( \widetilde{f} = f \circ \mathbf{x} \) and the density function \( \rho \), associated with the pullback volume form
\[
\mathbf{x}^* dV_y = \rho(\mathbf{s}) \, ds_{1} \wedge \cdots \wedge ds_{d},
\]
are real analytic on \( B_0(\delta) \), and that their Taylor series converge uniformly on this domain.

To begin, we compute the Taylor expansion of the product \( \widetilde{f}(\mathbf{s}) \, \mathbf{x}^* dV_y \) in geodesic polar coordinates. Since
\[
\widetilde{f}(\mathbf{s}) = \sum_{k=0}^{\infty} \frac{s^k}{k!} \widetilde{f}_k(\mathbf{v}), \quad 
\mathbf{x}^* dV_y = \left( \sum_{j=0}^{\infty} \frac{s^j}{j!} \rho_j(\mathbf{v}) \right) s^{d-1} \, ds \wedge d\sigma(\mathbf{v}),
\]
their product admits the expansion
\[
\widetilde{f}(\mathbf{s}) \, \mathbf{x}^* dV_y = 
\left( \sum_{\ell=0}^{\infty} \frac{s^{\ell}}{\ell!} \alpha_{\ell}(\mathbf{v}) \right) s^{d-1} \, ds \wedge d\sigma(\mathbf{v}),
\]
where each coefficient \( \alpha_{\ell}(\mathbf{v}) \) is either zero or a homogeneous polynomial of degree \( \ell \), and is given by the convolution formula
\[
\alpha_{\ell}(\mathbf{v}) = \sum_{j=0}^{\ell} \binom{\ell}{j} \widetilde{f}_j(\mathbf{v}) \, \rho_{\ell - j}(\mathbf{v})
\quad \text{for each } \ell \geq 0.
\]

Now, we define the function \( q : B_0(\delta) \to \mathbb{R} \) by
\[
q(\mathbf{s}) = g(\mathbf{s}) - s^2,
\]
where \( g : B_0(\delta) \to \mathbb{R} \) is the squared Euclidean distance function defined in the previous section, and \( s = \sqrt{s_1^2 + \cdots + s_d^2} \). Then \( q \) is analytic at the origin and admits a Taylor expansion of the form
\[
q(\mathbf{s}) = \sum_{j=0}^{\infty} \frac{1}{j!} q_j(\mathbf{s}),
\]
which converges on the open ball \( B_0(\delta) \). Here, each \( q_j(\mathbf{s}) \) is the degree-\( j \) homogeneous component in the Taylor expansion of \( q \).  
By Lemma~\ref{tmetric}, the first few terms vanish:
\[
q_0(\mathbf{s}) = 0, \quad
q_1(\mathbf{s}) = 0, \quad
q_2(\mathbf{s}) = 0, \quad
q_3(\mathbf{s}) = 0.
\]
Since \( q(\mathbf{s}) = g(\mathbf{s}) - s^2 \), we may write
\[
e^{-\frac{\| \mathbf{x}(\mathbf{s}) - x \|^2}{4\varepsilon}} = e^{-\frac{s^2}{4\varepsilon}} \cdot e^{-\frac{q(\mathbf{s})}{4\varepsilon}}.
\]
Hence, it suffices to compute the Taylor expansion of the function \( e^{-\frac{q(\mathbf{s})}{4\varepsilon}} \) about \( \mathbf{s} = 0 \).  
To carry out this computation systematically, we introduce the partial exponential Bell polynomials, which provide a convenient framework for expressing the Taylor expansion of composite functions such as \( e^{-\frac{q(\mathbf{s})}{4\varepsilon}} \).

\begin{defn}
The \emph{partial exponential Bell polynomial} \( B_{m,k}(x_1, x_2, \dots, x_{m-k+1}) \) is defined for integers \( m \geq 1 \) and \( 1 \leq k \leq m \) by
\[
B_{m,k}(x_1, x_2, \dots, x_{m-k+1}) = \sum \frac{m!}{j_1! j_2! \cdots j_{m-k+1}!}
\left( \frac{x_1}{1!} \right)^{j_1}
\left( \frac{x_2}{2!} \right)^{j_2}
\cdots
\left( \frac{x_{m-k+1}}{(m-k+1)!} \right)^{j_{m-k+1}},
\]
where the sum is taken over all sequences of non-negative integers \( (j_1, j_2, \dots, j_{m-k+1}) \) satisfying the constraints
\[
\sum_{i=1}^{m-k+1} j_i = k \quad \text{and} \quad \sum_{i=1}^{m-k+1} i j_i = m.
\]
\end{defn}

It is established that the generating function for the partial exponential Bell polynomials can be expressed as follows:
\begin{equation}\label{gb}
\exp\left(u \sum_{j=1}^\infty x_j \frac{t^j}{j!} \right)
= 1 + \sum_{m=1}^\infty \frac{t^m}{m!} \sum_{k=1}^m u^k B_{m,k}(x_1, x_2, \dots, x_{m-k+1}).
\end{equation}
By substituting \( x_j = q_j(\mathbf{v}) \) and \( t = s \) into equation \eqref{gb}, we derive
\[
e^{-\frac{q(\mathbf{s})}{4\epsilon}} = 1 + \sum_{m=1}^\infty \frac{s^m}{m!} \sum_{k=1}^m u^k b_{m,k}(\mathbf{v}),
\]
where \( u = (-4\epsilon)^{-1} \), and
\(
b_{m,k}(\mathbf{v}) = B_{m,k}\big(q_1(\mathbf{v}), q_2(\mathbf{v}), \dots, q_{m-k+1}(\mathbf{v})\big).
\)
Since each \( q_i \) is either zero or a homogeneous polynomial of degree \( i \), it follows from the definition of the partial exponential Bell polynomials that each \( b_{m,k}(\mathbf{v}) \) is also either zero or a homogeneous polynomial of degree \( m \).
In particular, given that \( q_1(\mathbf{v}) = q_2(\mathbf{v}) = q_3(\mathbf{v}) = 0 \), any monomial involving these terms will vanish. Thus, a necessary and sufficient condition for \( b_{m,k}(\mathbf{v}) \) to be a nonzero homogeneous polynomial is that all terms in its expansion must involve only \( q_j \) with \( j \geq 4 \). This requirement translates to the inequality:
\(
m \geq 4k.
\)
Let us define \( \beta_0(\mathbf{v}, u) = 1 \), and for \( m \geq 1 \),
\[
\beta_m(\mathbf{v}, u) = \sum_{k=1}^{m} u^k \, b_{m,k}(\mathbf{v}).
\]
Notice that $b_{m,k}(\mf v)=0$ when $k>m/4.$ Then the exponential factor admits the following expansion
\[
e^{-\frac{q(\mathbf{s})}{4\varepsilon}} = \sum_{m=0}^\infty \frac{s^m}{m!} \, \beta_m(\mathbf{v}, u).
\]
Define
\[
a_\ell(\mathbf{v}, u) = \sum_{m=0}^\ell \binom{\ell}{m} \alpha_{\ell - m}(\mathbf{v}) \, \beta_m(\mathbf{v}, u).
\]
Since each \( \alpha_j(\mathbf{v}) \) is either zero or a homogeneous polynomial of degree \( j \), and each \( \beta_k(\mathbf{v}, u) \) is either zero or a homogeneous polynomial of degree \( k \), their product yields a finite sum of homogeneous polynomials of total degree \( \ell \).  
Moreover, by symmetry, the integral of any homogeneous polynomial of odd degree over the unit sphere \( S^{d-1} \) vanishes. Therefore,
\[
\int_{S^{d-1}} a_\ell(\mathbf{v}, u) \, d\sigma(\mathbf{v}) = 0 \qquad \text{whenever } \ell \text{ is odd}.
\]
It thus suffices to consider the case \( \ell = 2p \), where \( p \in \mathbb{N}_0 \). In that case, we have
\[
\int_{S^{d-1}} a_{2p}(\mathbf{v}, u) \, d\sigma(\mathbf{v}) = 
\int_{S^{d-1}} \alpha_{2p}(\mathbf{v}) \, d\sigma(\mathbf{v}) 
+ \sum_{m=1}^{2p} \binom{2p}{m} \int_{S^{d-1}} \alpha_{2p - m}(\mathbf{v}) \, \beta_m(\mathbf{v}, u) \, d\sigma(\mathbf{v}).
\]
We now define the scalar coefficients
\[
\eta_p = \frac{1}{\omega_d} \int_{S^{d-1}} \alpha_{2p}(\mathbf{v}) \, d\sigma(\mathbf{v}), \quad
w_{p,m,k} = \frac{1}{\omega_d} \int_{S^{d-1}} \alpha_{2p - m}(\mathbf{v}) \, b_{m,k}(\mathbf{v}) \, d\sigma(\mathbf{v}),
\]
for \( 1 \leq k \leq \left\lfloor \frac{m}{4} \right\rfloor \), where \( \omega_d = \operatorname{Vol}(S^{d-1}) \) denotes the volume of the unit sphere in \( \mathbb{R}^d \).  
Then we obtain
\[
\frac{1}{\omega_d} \int_{S^{d-1}} a_{2p}(\mathbf{v}, u) \, d\sigma(\mathbf{v})
= \eta_p + \sum_{m=1}^{2p} \sum_{k=1}^{m} \binom{2p}{m} (-4\varepsilon)^{-k} \, w_{p,m,k}.
\]

Since  
\[
\int_{B_0(\delta)} e^{-\frac{\| \mathbf{x}(\mathbf{s}) - x \|^2}{4\varepsilon}} f(\mathbf{x}(\mathbf{s})) \, \mathbf{x}^* dV_y
= \sum_{\ell=0}^{\infty} \frac{1}{\ell!} \int_{S^{d-1}} \left( \int_0^\delta e^{-\frac{s^2}{4\varepsilon}} s^{\ell + d - 1} \, ds \right) a_{\ell}(\mathbf{v}, u) \, d\sigma(\mathbf{v}),
\]
we obtain the expansion  
\[
\int_{\mathcal{B}_x(\delta)} k_\varepsilon(x, y) f(y) \, dV_y
= \sum_{p=0}^{\infty} \frac{ \omega_d c_p(\varepsilon) }{(2p)!}
\left(\eta_p + \sum_{m=1}^{2p} \sum_{k=1}^{m}
\binom{2p}{m} \frac{1}{(-4\varepsilon)^k} \, w_{p,m,k} \right),
\]
where  
\[
c_p(\varepsilon) = \frac{1}{(4\pi\varepsilon)^{\frac{d}{2}}} \int_0^\delta e^{- \frac{s^2}{4\varepsilon}} s^{2p + d - 1} \, ds.
\]

This power series can be reorganized as  
\[
\int_{\mathcal{B}_x(\delta)} k_\varepsilon(x, y) f(y) \, dV_y
= \sum_{q=0}^{\infty} \left( \frac{ \omega_d c_q(\varepsilon) }{(2q)!}\eta_{q}
+ \sum_{k=1}^{q} \sum_{m=4k}^{2q+2k} \frac{\omega_d c_{q+k}(\varepsilon)}{m!(2q+2k-m)!} (-4\varepsilon)^{-k} w_{q+k,m,k} \right).
\]

Next, we will use a sneaky trick to compute the asymptotic expansion.

\begin{lem}
For each nonnegative integer \( p \), we have
\[
\left| c_p(\varepsilon) - \frac{(4\varepsilon)^p}{2(\pi)^{\frac{d}{2}}} \, \Gamma\left( p + \frac{d}{2} \right) \right|
\leq
2^{p + \frac{d}{2}} \, e^{ -\frac{\delta^2}{8\varepsilon} } \cdot \frac{(4\varepsilon)^p}{2(\pi)^{\frac{d}{2}}} \, \Gamma\left( p + \frac{d}{2} \right).
\]
\end{lem}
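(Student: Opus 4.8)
\section*{Proof proposal}

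The plan is to recognize the claimed closed form \( \frac{(4\varepsilon)^p}{2\pi^{d/2}}\Gamma\!\left(p+\frac d2\right) \) as exactly the value one obtains from \( c_p(\varepsilon) \) by extending the radial integration from \( [0,\delta] \) to all of \( [0,\infty) \). Once this is identified, the quantity to be estimated becomes a pure Gaussian tail integral over \( [\delta,\infty) \), and the stated inequality is an estimate on that tail. First I would record the elementary Gaussian moment identity: for any \( a>0 \), the substitution \( u = a s^2 \) yields
\[
\int_0^\infty e^{-a s^2} s^{2p+d-1}\,ds = \tfrac{1}{2}\,a^{-(p+\frac d2)}\,\Gamma\!\left(p+\tfrac d2\right).
\]
Taking \( a = 1/(4\varepsilon) \) and multiplying by \( (4\pi\varepsilon)^{-d/2} \) shows that
\[
\frac{1}{(4\pi\varepsilon)^{d/2}}\int_0^\infty e^{-\frac{s^2}{4\varepsilon}} s^{2p+d-1}\,ds = \frac{(4\varepsilon)^p}{2\pi^{d/2}}\,\Gamma\!\left(p+\tfrac d2\right),
\]
which is precisely the quantity being subtracted in the lemma. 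Since the integrand is nonnegative, it follows that
\[
c_p(\varepsilon) - \frac{(4\varepsilon)^p}{2\pi^{d/2}}\Gamma\!\left(p+\tfrac d2\right) = -\frac{1}{(4\pi\varepsilon)^{d/2}}\int_\delta^\infty e^{-\frac{s^2}{4\varepsilon}} s^{2p+d-1}\,ds,
\]
so the left-hand side of the lemma equals \( (4\pi\varepsilon)^{-d/2} \) times this tail integral.

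The key step (the ``sneaky trick'' announced in the text) is the exponent splitting: for \( s\ge\delta \) write \( \frac{s^2}{4\varepsilon} = \frac{s^2}{8\varepsilon}+\frac{s^2}{8\varepsilon} \) and bound one of the two half-Gaussian factors on the tail by its value at the endpoint, \( e^{-s^2/(8\varepsilon)}\le e^{-\delta^2/(8\varepsilon)} \). This extracts the exponentially small factor and, after enlarging the domain of integration back to \( [0,\infty) \) (which only increases the integral of the positive integrand), leaves a convergent Gaussian integral:
\[
\int_\delta^\infty e^{-\frac{s^2}{4\varepsilon}} s^{2p+d-1}\,ds \;\le\; e^{-\frac{\delta^2}{8\varepsilon}}\int_0^\infty e^{-\frac{s^2}{8\varepsilon}} s^{2p+d-1}\,ds.
\]

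Finally I would evaluate this last integral by applying the moment identity once more, now with \( a = 1/(8\varepsilon) \), obtaining \( \frac{1}{2}(8\varepsilon)^{p+d/2}\Gamma\!\left(p+\frac d2\right) \); multiplying by \( (4\pi\varepsilon)^{-d/2} \) and simplifying the powers of two via \( (8\varepsilon)^{p+d/2}/(4\varepsilon)^{d/2} = 2^{3p+d/2}\varepsilon^p \) collapses the estimate to exactly \( 2^{p+d/2}\,e^{-\delta^2/(8\varepsilon)}\cdot\frac{(4\varepsilon)^p}{2\pi^{d/2}}\Gamma\!\left(p+\frac d2\right) \), which is the claimed bound. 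There is no genuine analytic obstacle here: the whole argument rests on the exponent-splitting inequality, and the only point requiring care is the bookkeeping of the factors \( 2^{(\cdots)} \) and the \( \varepsilon \)-powers so that the coefficient \( 2^{p+d/2} \) emerges cleanly.
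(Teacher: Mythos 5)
Your proposal is correct and is essentially the paper's own argument: the paper first substitutes \( t = s^2/(4\varepsilon) \) to express \( c_p(\varepsilon) \) as a complete Gamma integral minus an incomplete tail, and then applies the very same exponent-halving bound \( e^{-t} \le e^{-\delta^2/(8\varepsilon)} e^{-t/2} \) on the tail before extending the domain of integration; your version performs the identical split directly in the \( s \) variable, and your bookkeeping of the powers of two checks out, reproducing the stated constant \( 2^{p+d/2} \) exactly.
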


\begin{proof}
Using a change of variables, we have
\[
\int_{0}^{\delta} e^{-\frac{s^{2}}{4\varepsilon}} s^{2p + d - 1} \, ds 
= \frac{(4\varepsilon)^{p + \frac{d}{2}}}{2} \int_{0}^{\frac{\delta^{2}}{4\varepsilon}} e^{-t} t^{p + \frac{d}{2} - 1} \, dt.
\]
Hence,
\[
c_p(\varepsilon) 
= \frac{(4\varepsilon)^p}{2(\pi)^{\frac{d}{2}}} \int_{0}^{\frac{\delta^{2}}{4\varepsilon}} e^{-t} t^{p + \frac{d}{2} - 1} \, dt 
= \frac{(4\varepsilon)^p}{2(\pi)^{\frac{d}{2}}} \left( \Gamma\left(p + \frac{d}{2} \right) - \int_{\frac{\delta^2}{4\varepsilon}}^{\infty} e^{-t} t^{p + \frac{d}{2} - 1} \, dt \right).
\]
Therefore,
\begin{align*}
\left| c_p(\varepsilon) - \frac{(4\varepsilon)^p}{2\pi^{\frac{d}{2}}} \Gamma\left( p + \frac{d}{2} \right) \right| 
&= \frac{(4\varepsilon)^p}{2\pi^{\frac{d}{2}}} \int_{\frac{\delta^2}{4\varepsilon}}^{\infty} e^{-t} t^{p + \frac{d}{2} - 1} \, dt \\
&\leq \frac{(4\varepsilon)^p}{2\pi^{\frac{d}{2}}} \, e^{ -\frac{\delta^2}{8\varepsilon} } \int_{\frac{\delta^2}{4\varepsilon}}^{\infty} e^{-t/2} t^{p + \frac{d}{2} - 1} \, dt \\
&\leq \frac{(4\varepsilon)^p}{2\pi^{\frac{d}{2}}} \, e^{ -\frac{\delta^2}{8\varepsilon} } \int_{0}^{\infty} e^{-t/2} t^{p + \frac{d}{2} - 1} \, dt \\
&=  2^{p + \frac{d}{2}} \, e^{ -\frac{\delta^2}{8\varepsilon} } \cdot \frac{(4\varepsilon)^p}{2\pi^{\frac{d}{2}}} \, \Gamma\left( p + \frac{d}{2} \right).
\end{align*}
Simplifying the constants gives the desired bound.
\end{proof}

As a consequence, we obtain the estimate  
\[
\left| \omega_d c_p(\varepsilon) - \left( \frac{d}{2} \right)_p(4\varepsilon)^p \right|
\leq
2^{p + \frac{d}{2}} \left( \frac{d}{2} \right)_p (4\varepsilon)^p \, e^{- \frac{\delta^2}{8\varepsilon}},
\]
where \( (q)_n = \prod_{j=0}^{n-1} (q + j) \) for \( n \geq 1 \), and \( (q)_0 = 1 \) denotes the Pochhammer symbol.
Therefore,
\begin{align*}
\int_{\mathcal{B}_x(\delta)} k_\varepsilon(x, y) f(y) \, dV_y
&= \sum_{q=0}^{\infty} \varepsilon^q \left[ \frac{4^q  \left( \frac{d}{2} \right)_q}{(2q)!}\eta_{q}
+ \sum_{k=1}^{q} \sum_{m=4k}^{2q+2k} \frac{(-1)^k 4^q \left( \frac{d}{2} \right)_{q+k}}{m!(2q+2k - m)!} w_{q+k, m, k} \right] \\
&\quad + R_\varepsilon(\delta),
\end{align*}
where the remainder term \( R_\varepsilon(\delta) \) is given by
\[
R_\varepsilon(\delta) = \sum_{q=0}^{\infty} \left[ \frac{ \omega_d c_q(\varepsilon) - \left( \frac{d}{2} \right)_q(4\varepsilon)^q }{(2q)!}\eta_{q}
+ \sum_{k=1}^{q} \sum_{m=4k}^{2q+2k} \frac{ \omega_d c_{q+k}(\varepsilon) -\left( \frac{d}{2} \right)_{q+k}(4\varepsilon)^{q+k} }{m!(2q+2k - m)!} (-4\varepsilon)^{-k} w_{q+k, m, k} \right].
\]

We estimate the remainder as follows:
\begin{align*}
\left| \frac{ \omega_d c_q(\varepsilon) - \left( \frac{d}{2} \right)_q (4\varepsilon)^q }{(2q)!}\eta_{q} \right|
&\leq \varepsilon^q e^{- \frac{\delta^2}{8\varepsilon}} \cdot \frac{ 2^{3q + \frac{d}{2}} \left( \frac{d}{2} \right)_q}{(2q)!}|\eta_{q}|, \\
\left| \frac{ \omega_d c_{q+k}(\varepsilon) - \left( \frac{d}{2} \right)_{q+k}(4\varepsilon)^{q+k} }{m!(2q+2k - m)!} (-4\varepsilon)^{-k} w_{q+k, m, k} \right|
&\leq \varepsilon^q e^{- \frac{\delta^2}{8\varepsilon}} \cdot \frac{ 2^{3q + k + \frac{d}{2}} \left( \frac{d}{2} \right)_{q+k}}{m!(2q+2k - m)!} \left| w_{q+k, m, k} \right|,
\end{align*}
and hence
\[
|R_{\varepsilon}(\delta)| \leq e^{- \frac{\delta^2}{8\varepsilon}} \, \xi(\varepsilon),
\]
where \( \xi(\varepsilon) = \sum_{q=0}^{\infty} \varepsilon^q \xi_q \) is a convergent power series with
\[
\xi_q = \frac{ 2^{3q + \frac{d}{2}} \left( \frac{d}{2} \right)_q }{(2q)!}|\eta_{q}|
+ \sum_{k=1}^{q} \sum_{m=4k}^{2q+2k} \frac{ 2^{3q + k + \frac{d}{2}} \left( \frac{d}{2} \right)_{q+k} }{(2q+2k)!}\binom{2q+2k}{m} \left| w_{q+k, m, k} \right|.
\]
This also implies that
\[
\lim_{\varepsilon \to 0^+} \frac{R_{\varepsilon}(\delta)}{\varepsilon^k} = 0
\]
for all \( k \in \mathbb{N} \). We conclude that the asymptotic expansion of \( \mathcal{K}_\varepsilon f \) as \( \varepsilon \to 0^+ \) is given by:

\begin{thm}
For each \( q \geq 0 \), define
\[
a_q(x) = \frac{4^q \left( \frac{d}{2} \right)_q}{(2q)!} \eta_q
+ \sum_{k=1}^{q} \sum_{m=4k}^{2q+2k} \frac{(-1)^k 4^q \left( \frac{d}{2} \right)_{q+k}}{m!(2q+2k - m)!} w_{q+k, m, k}.
\]
Then the operator \( \mathcal{K}_\varepsilon \) admits the asymptotic expansion
\[
(\mathcal{K}_\varepsilon f)(x) \sim \sum_{q=0}^{\infty} a_q(x) \, \varepsilon^q \quad \text{as } \varepsilon \to 0^+.
\]
\end{thm}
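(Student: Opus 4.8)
The plan is to assemble the pieces developed above into a single power series in $\varepsilon$ and to verify that the tail is transcendentally small. First I would recall the decomposition $\mathcal{K}_\varepsilon f(x) = \int_{\mathcal{B}_x(\delta)} k_\varepsilon(x,y) f(y)\,dV_y + \int_{M \setminus \mathcal{B}_x(\delta)} k_\varepsilon(x,y) f(y)\,dV_y$ and discard the exterior integral, which was already shown to be $o(\varepsilon^k)$ for every $k$ thanks to the factor $e^{-m_\delta^2/4\varepsilon}$. It therefore suffices to analyze the interior integral over the geodesic ball.

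Next I would justify the term-by-term integration. Passing to geodesic polar coordinates, the interior integrand is the product of $e^{-s^2/4\varepsilon}$, the analytic factor $\widetilde{f}(\mathbf{s})\,\rho(\mathbf{s})$, and the exponential correction $e^{-q(\mathbf{s})/4\varepsilon}$. Under the standing analyticity and uniform-convergence hypotheses, I would expand the angular part into the homogeneous coefficients $a_\ell(\mathbf{v}, u)$ and interchange the radial integration, the angular integration, and the infinite summation; this interchange is the point where I would invoke uniform convergence of the Taylor series on the compact sphere $S^{d-1}$ and on $(0,\delta)$. The vanishing of the sphere integral for odd $\ell$ reduces attention to $\ell = 2p$, producing the scalars $\eta_p$ and $w_{p,m,k}$ together with the radial moments $c_p(\varepsilon)$.

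The core computation is the re-indexing. Since $b_{m,k}(\mathbf{v})$ is nonzero only when $m \geq 4k$, the quantity $w_{p,m,k}$ is supported on $4k \leq m \leq 2p$, which forces $1 \leq k \leq \floor{p/2}$; this guarantees that only finitely many pairs $(k,m)$ contribute to each fixed power of $\varepsilon$. Substituting the preceding Lemma's approximation $\omega_d\, c_p(\varepsilon) = \left(\tfrac{d}{2}\right)_p (4\varepsilon)^p + O\!\left(\varepsilon^p e^{-\delta^2/8\varepsilon}\right)$ converts each term $c_p(\varepsilon)(-4\varepsilon)^{-k}$ into a multiple of $\varepsilon^{p-k}$ up to a transcendentally small error. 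Setting $q = p - k$ (equivalently $p = q+k$) then collects all contributions of order $\varepsilon^q$ into the single coefficient $a_q(x)$ stated in the theorem, with the accumulated errors gathered into the remainder $R_\varepsilon(\delta)$; the ranges $1 \leq k \leq q$ and $4k \leq m \leq 2q+2k$ in the displayed formula for $a_q(x)$ are exactly the images of the support conditions under this substitution.

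The final, and main, obstacle is controlling $R_\varepsilon(\delta)$. Here I would apply the Lemma's bound to each summand to obtain $|R_\varepsilon(\delta)| \leq e^{-\delta^2/8\varepsilon}\,\xi(\varepsilon)$, with $\xi(\varepsilon) = \sum_{q=0}^\infty \xi_q\, \varepsilon^q$ the explicit majorant written above. The crux is to check that $\xi(\varepsilon)$ has positive radius of convergence: this follows from the analyticity of $\widetilde{f}\rho$, which furnishes the geometric-type bounds on $|\eta_q|$ and $|w_{q+k,m,k}|$, combined with the factorial denominators and the polynomial growth of $\left(\tfrac{d}{2}\right)_{q+k}$. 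Granting this, $\xi(\varepsilon)$ is bounded on a neighborhood of $0$, so the factor $e^{-\delta^2/8\varepsilon}$ renders $R_\varepsilon(\delta) = o(\varepsilon^k)$ for every $k$. Combining the exterior estimate, the reorganized interior series, and this remainder bound shows that for each $N$ one has $(\mathcal{K}_\varepsilon f)(x) - \sum_{q=0}^{N} a_q(x)\,\varepsilon^q = o(\varepsilon^N)$ as $\varepsilon \to 0^+$, which is precisely the asserted asymptotic expansion.
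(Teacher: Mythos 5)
Your proposal is correct and follows essentially the same route as the paper: the interior/exterior decomposition with the exponentially small exterior term, expansion in geodesic polar coordinates into the coefficients $a_\ell(\mathbf{v},u)$, the re-indexing $p=q+k$ driven by the support condition $m\geq 4k$, the Gamma-function lemma for $\omega_d c_p(\varepsilon)$, and the bound $|R_\varepsilon(\delta)|\leq e^{-\delta^2/8\varepsilon}\,\xi(\varepsilon)$. You even flag explicitly the one point the paper passes over quickly, namely that $\xi(\varepsilon)$ must have positive radius of convergence, so nothing essential is missing.
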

\begin{proof}
We sketch the idea of the proof. Fix \( x \in M \), and let \( \mathbf{x}: B_0(\delta) \subset \mathbb{R}^d \to M \) be a system of geodesic normal coordinates centered at \( x \), where \( \delta > 0 \) is smaller than the injectivity radius at \( x \). We decompose the integral defining \( \mathcal{K}_\varepsilon f(x) \) as
\[
(\mathcal{K}_\varepsilon f)(x) = \frac{1}{(4\pi\varepsilon)^{\frac{d}{2}}} \left[ \int_{\mathcal{B}_x(\delta)} e^{ -\frac{ \| x - y \|^2 }{4\varepsilon} } f(y) \, dV_y + \int_{M \setminus \mathcal{B}_x(\delta)} e^{ -\frac{ \| x - y \|^2 }{4\varepsilon} } f(y) \, dV_y \right].
\]

The second term is exponentially small: there exists a constant \( C > 0 \) such that
\[
\int_{M \setminus \mathcal{B}_x(\delta)} e^{ -\frac{ \| x - y \|^2 }{4\varepsilon} } f(y) \, dV_y = O(e^{-C/\varepsilon}) = o(\varepsilon^{N+1}),
\]
for every \( N \), since the integrand decays faster than any polynomial in \( \varepsilon \).

We now focus on the integral over \( \mathcal{B}_x(\delta) \), transforming it into Euclidean coordinates via the normal chart:
\[
I_\varepsilon(x) = \frac{1}{(4\pi\varepsilon)^{\frac{d}{2}}} \int_{B_0(\delta)} e^{- \frac{ \| \mathbf{x}(\mathbf{s}) - x \|^2 }{4\varepsilon} } \, \widetilde{f}(\mathbf{s}) \, \rho(\mathbf{s}) \, d\mathbf{s},
\]
where \( \widetilde{f} = f \circ \mathbf{x} \), \( \rho(\mathbf{s}) \) is the density of the volume form \( \mathbf{x}^* dV \), and \( d\mathbf{s} = ds_1 \cdots ds_d \).

Since \( \| \mathbf{x}(\mathbf{s}) - x \|^2 = s^2 + q(\mathbf{s}) \), where \( q(\mathbf{s})\)                                                                                                                                                                  consists of quartic and higher-order terms in \( \mathbf{s} \), we expand the exponential as
\[
e^{ - \frac{ \| \mathbf{x}(\mathbf{s}) - x \|^2 }{4\varepsilon} } 
= e^{ - \frac{s^2}{4\varepsilon} } \left( 1 + \sum_{k=1}^{N} \frac{ (-1)^k }{k!} \left( \frac{ q(\mathbf{s}) }{4\varepsilon} \right)^k + R_N(\mathbf{s},\varepsilon) \right),
\]
where \( R_N(\mathbf{s}, \varepsilon) = O\left( \frac{s^{4N+4}}{\varepsilon^{N+1}} \right) \).

Likewise, we Taylor expand \( \widetilde{f}(\mathbf{s}) \rho(\mathbf{s}) \) at \( \mathbf{s} = 0 \) up to order \( 2N \):
\[
\widetilde{f}(\mathbf{s}) \rho(\mathbf{s}) = \sum_{j \le 2N} \frac{ H_{j}(\widetilde{f} \rho)}{j!}(0)  s^{j} + O(s^{2N+1}).
\]

Multiplying the expansions and collecting terms of total degree \( 2q \), we integrate term by term over \( \mathbf{s} \in B_0(\delta) \). Using spherical coordinates \( \mathbf{s} = s \mathbf{v} \), with \( \mathbf{v} \in S^{d-1} \), and the fact that
\[
\int_{B_0(\delta)} e^{ -\frac{s^2}{4\varepsilon} } s^{2q+d-1} \, ds \, d\sigma(\mathbf{v}) = c_q(\varepsilon)\, \omega_d,
\]
we find that the coefficient of \( \varepsilon^q \) arises from the terms \( \eta_q \) involving \( s^{2q} \) in the expansion of \( \widetilde{f}(\mathbf{s}) \rho(\mathbf{s}) \), as well as interaction terms involving \( q(\mathbf{s}) \), contributing correction terms \( w_{q+k,m,k} \) from higher-order interactions.
Since \( c_q(\varepsilon) \sim \varepsilon^{q} \) up to a constant multiple, we obtain, up to order \( N \),
\[
I_\varepsilon(x) = \sum_{q=0}^{N} \left( \frac{ \omega_d c_q(\varepsilon) }{(2q)!} \eta_q 
+ \sum_{k=1}^{q} \sum_{m=4k}^{2q+2k} \frac{ \omega_d c_{q+k}(\varepsilon) }{ m!(2q+2k - m)! } 
(-4\varepsilon)^{-k} w_{q+k, m, k} \right) + o(\varepsilon^{N+1}).
\]

By the previous lemma, this expression can be written as
\[
\sum_{q=0}^{N} a_q(x)\, \varepsilon^q + o(\varepsilon^{N+1}).
\]
Therefore, we conclude that
\[
(\mathcal{K}_\varepsilon f)(x) = I_\varepsilon(x) + o(\varepsilon^{N+1}) = \sum_{q=0}^{N} a_q(x) \varepsilon^q + o(\varepsilon^{N+1}),
\]
as required. Since this holds for arbitrary \( N \), the result follows.
\end{proof}

We remark that the quantities \( \eta_q \) and \( w_{q+k, m, k} \) are averages of homogeneous polynomials over the unit sphere \( S^{d-1} \), and hence can be computed using spherical moment integrals. For example, see G. B. Folland, \cite{F}.

\begin{thm}
For each multi-index \( \alpha = (\alpha_1, \dots, \alpha_d) \), the average of the monomial \( \mathbf{v}^{\alpha} = v_1^{\alpha_1} \cdots v_d^{\alpha_d} \) over the unit sphere \( S^{d-1} \subset \mathbb{R}^d \) is given by
\[
\frac{1}{\omega_d} \int_{S^{d-1}} \mathbf{v}^{\alpha} \, d\sigma(\mathbf{v}) =
\begin{cases}
0 & \text{if any } \alpha_i \text{ is odd}, \\
\displaystyle 2 \cdot \frac{ \prod_{i=1}^{d} \Gamma\left( \frac{\alpha_i + 1}{2} \right) }{ \Gamma\left( \frac{|\alpha| + d}{2} \right) } & \text{if all } \alpha_i \text{ are even}.
\end{cases}
\]
\end{thm}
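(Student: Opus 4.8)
The plan is to reduce the spherical integral to a computation involving Gaussian integrals over all of \( \mathbb{R}^d \), a classical device that sidesteps any direct integration over \( S^{d-1} \). The odd case should be disposed of first: if some \( \alpha_i \) is odd, then the reflection \( v_i \mapsto -v_i \) is an isometry of \( S^{d-1} \) that preserves \( d\sigma \) while sending \( \mathbf{v}^\alpha \) to \( -\mathbf{v}^\alpha \), forcing the integral to vanish. This justifies restricting attention to multi-indices all of whose entries are even. For that case, the key idea is to evaluate the single integral
\[
J(\alpha) := \int_{\mathbb{R}^d} \mathbf{x}^\alpha \, e^{-\|\mathbf{x}\|^2} \, d\mathbf{x}
\]
in two different coordinate systems and compare the results.

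First I would compute \( J(\alpha) \) in Cartesian coordinates. Since \( e^{-\|\mathbf{x}\|^2} = \prod_{i=1}^d e^{-x_i^2} \), the integrand factors and Fubini's theorem gives
\[
J(\alpha) = \prod_{i=1}^d \int_{-\infty}^{\infty} t^{\alpha_i} e^{-t^2} \, dt = \prod_{i=1}^d \Gamma\!\left( \frac{\alpha_i + 1}{2} \right),
\]
where each one-dimensional moment (for even \( \alpha_i \)) is reduced to the Gamma function by the substitution \( u = t^2 \). Next I would recompute \( J(\alpha) \) in polar coordinates \( \mathbf{x} = r\mathbf{v} \), with \( r > 0 \), \( \mathbf{v} \in S^{d-1} \), and \( d\mathbf{x} = r^{d-1}\, dr\, d\sigma(\mathbf{v}) \). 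Using the homogeneity \( \mathbf{x}^\alpha = r^{|\alpha|}\mathbf{v}^\alpha \), the integral separates into a radial and an angular factor:
\[
J(\alpha) = \left( \int_0^\infty r^{|\alpha| + d - 1} e^{-r^2}\, dr \right)\!\left( \int_{S^{d-1}} \mathbf{v}^\alpha \, d\sigma(\mathbf{v}) \right) = \frac{1}{2}\,\Gamma\!\left( \frac{|\alpha| + d}{2} \right) \int_{S^{d-1}} \mathbf{v}^\alpha \, d\sigma(\mathbf{v}),
\]
where the radial factor is again handled by the substitution \( u = r^2 \).

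Equating the two expressions for \( J(\alpha) \) and solving for the spherical integral yields precisely the closed form recorded on the right-hand side of the statement,
\[
\int_{S^{d-1}} \mathbf{v}^\alpha \, d\sigma(\mathbf{v}) = \frac{2\prod_{i=1}^d \Gamma\!\left( \frac{\alpha_i+1}{2} \right)}{\Gamma\!\left( \frac{|\alpha|+d}{2} \right)},
\]
and the normalized average used elsewhere in the paper follows by dividing through by \( \omega_d = \operatorname{Vol}(S^{d-1}) = 2\pi^{d/2}/\Gamma(d/2) \). I do not expect any serious obstacle, since the argument is entirely elementary; the only points demanding care are invoking the reflection symmetry in the odd case \emph{before} the even computation, keeping the Gamma-function substitutions consistent in both the Cartesian and radial evaluations, and recalling the explicit value of \( \omega_d \) for the normalization.
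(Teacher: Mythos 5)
Your argument is correct and is precisely the Gaussian-integral device of Folland's note \cite{F}, which is the source the paper cites for this theorem (the paper itself supplies no proof); the reflection argument for the odd case and the two evaluations of \( J(\alpha) \) are all carried out correctly. One point worth flagging: what your computation actually establishes is
\[
\int_{S^{d-1}} \mathbf{v}^{\alpha}\, d\sigma(\mathbf{v}) \;=\; \frac{2\prod_{i=1}^{d}\Gamma\!\left(\frac{\alpha_i+1}{2}\right)}{\Gamma\!\left(\frac{|\alpha|+d}{2}\right)},
\]
i.e.\ the \emph{unnormalized} integral, whereas the statement attaches the factor \( \frac{1}{\omega_d} \) to the left-hand side without dividing the right-hand side by \( \omega_d \); taking \( \alpha=0 \) (or \( \alpha=2e_i \), which must average to \( 1/d \) as used later in the paper) shows the statement as printed is off by the factor \( \omega_d \), and your closing remark about dividing by \( \omega_d \) is the correct resolution. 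So treat this as a normalization slip in the theorem's wording rather than a gap in your proof.
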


We will compute \( a_0(x) \) and \( a_1(x) \) using the above formulas. First, let us compute \( a_0(x) = \eta_0 \). Since \( \alpha_0(\mathbf{v}) = \widetilde{f}_0(\mathbf{v}) \rho_0(\mathbf{v}) \), and
\[
\widetilde{f}_0(\mathbf{v}) = \widetilde{f}(0), \quad \rho_0(\mathbf{v}) = 1,
\]
we have
\[
\eta_0 = \frac{1}{\omega_d} \int_{S^{d-1}} \alpha_0(\mathbf{v}) \, d\sigma(\mathbf{v}) 
= \frac{1}{\omega_d} \int_{S^{d-1}} \widetilde{f}(0) \, d\sigma(\mathbf{v}) 
= \widetilde{f}(0) = f(x).
\]
We conclude that
\[
a_0(x) = f(x).
\]

Now, let us compute \( a_1(x) \). By the previous result,
\[
a_1(x) = d \eta_1 - \frac{d(d+2)}{4!} w_{2,4,1}.
\]
It therefore suffices to compute \( \eta_1 \) and \( w_{2,4,1} \):
\[
\eta_1 = \frac{1}{\omega_d} \int_{S^{d-1}} \alpha_2(\mathbf{v}) \, d\sigma(\mathbf{v}), \quad
w_{2,4,1} = \frac{1}{\omega_d} \int_{S^{d-1}} \alpha_0(\mathbf{v}) \, b_{4,1}(\mathbf{v}) \, d\sigma(\mathbf{v}).
\]
Since
\[
\alpha_2(\mathbf{v}) = \widetilde{f}_2(\mathbf{v}) \rho_0(\mathbf{v}) + 2 \widetilde{f}_1(\mathbf{v}) \rho_1(\mathbf{v}) + \widetilde{f}_0(\mathbf{v}) \rho_2(\mathbf{v}),
\]
and \( \widetilde{f}_0(\mathbf{v}) = f(x) \), by Theorem~\ref{g1}, we obtain
\[
\eta_1 = \frac{1}{\omega_d} \left( \int_{S^{d-1}} \widetilde{f}_2(\mathbf{v}) \, d\sigma(\mathbf{v}) + f(x) \int_{S^{d-1}} \rho_2(\mathbf{v}) \, d\sigma(\mathbf{v}) \right).
\]
Since
\begin{align*}
\widetilde{f}_2(\mathbf{v}) &= \sum_{i=1}^{d} \frac{\partial^2 \widetilde{f}}{\partial s_i^2}(0) \, v_i^2
+ 2 \sum_{1 \leq i < j \leq d} \frac{\partial^2 \widetilde{f}}{\partial s_i \partial s_j}(0) \, v_i v_j, \\
\rho_2(\mathbf{v}) &=  -\frac{1}{3} \sum_{i=1}^{d} R_{ii}(x) \, v_i^2
- \frac{1}{3} \sum_{1 \leq i < j \leq d} R_{ij}(x) \, v_i v_j,
\end{align*}
and using the identity
\[
\frac{1}{\omega_d} \int_{S^{d-1}} v_i^2 \, d\sigma(\mathbf{v}) = \frac{1}{d},
\]
we obtain
\begin{align*}
\frac{1}{\omega_d} \int_{S^{d-1}} \widetilde{f}_2(\mathbf{v}) \, d\sigma(\mathbf{v}) &= -\frac{1}{d} (\Delta f)(x), \\
\frac{1}{\omega_d} \int_{S^{d-1}} \rho_2(\mathbf{v}) \, d\sigma(\mathbf{v}) &= -\frac{1}{3d} R(x),
\end{align*}
where
\[
(\Delta f)(x) = -\sum_{i=1}^{d} \frac{\partial^2 \widetilde{f}}{\partial s_i^2}(0), \quad
R(x) = \sum_{i=1}^{d} R_{ii}(x),
\]
and \( R(x) \) denotes the scalar curvature of \( M \) at the point \( x \). Therefore, we conclude that
\[
\eta_1 = \frac{1}{d} \left( -(\Delta f)(x) - \frac{1}{3} R(x) f(x) \right).
\]

Now, let us compute \( w_{2,4,1} \). Since \( \alpha_0(\mathbf{v}) = f(x) \), we have
\[
w_{2,4,1} = \frac{f(x)}{\omega_d} \int_{S^{d-1}} b_{4,1}(\mathbf{v}) \, d\sigma(\mathbf{v}).
\]
Since \( B_{m,1}(x_1, \dots, x_m) = x_m \), it follows that
\[
b_{4,1}(\mathbf{v}) = q_4(\mathbf{v}) = -2 \|\mf B_x(\mathbf{v}_x, \mathbf{v}_x) \|^2.
\]
Thus,
\[
w_{2,4,1} = -2 \frac{f(x)}{\omega_d} \int_{S^{d-1}} \|\mf B_x(\mathbf{v}_x, \mathbf{v}_x) \|^2 \, d\sigma(\mathbf{v}).
\]
For \( \mathbf{v} = (v_1, \dots, v_d) \in S^{d-1} \), we write \( \mathbf{v}_x = \sum_{i=1}^d v_i (\mathbf{e}_i)_x \). Then
\[
\|\mf B_x(\mathbf{v}_x, \mathbf{v}_x) \|^2 = \sum_{i,j,k,l=1}^d b_{ijkl}(x) \, v_i v_j v_k v_l,
\]
where
\[
b_{ijkl}(x) = \left\langle \mf B_x((\mathbf{e}_i)_x, (\mathbf{e}_j)_x), \,\mf B_x((\mathbf{e}_k)_x, (\mathbf{e}_l)_x) \right\rangle.
\]
Using the known spherical moment integrals:
\[
\frac{1}{\omega_d} \int_{S^{d-1}} v_i^2 v_j^2 \, d\sigma(\mathbf{v}) =
\begin{cases}
\frac{1}{d(d+1)} & \text{if } i \ne j, \\
\frac{3}{d(d+2)} & \text{if } i = j,
\end{cases}
\]
we obtain
\[
\frac{1}{\omega_d} \int_{S^{d-1}} \sum_{i,j,k,l=1}^d b_{ijkl}(x) \, v_i v_j v_k v_l \, d\sigma(\mathbf{v})
= \frac{1}{d(d+2)} \sum_{i,j=1}^d \left( b_{iijj}(x) + b_{ijij}(x) + b_{ijji}(x) \right).
\]
Finally, by the Gauss equation, we conclude that
\[
\frac{1}{\omega_d} \int_{S^{d-1}} \|\mf B_x(\mathbf{v}_x, \mathbf{v}_x) \|^2 \, d\sigma(\mathbf{v})
= \frac{1}{d(d+2)} \left( 3 \left\| \sum_{i=1}^d\mf B_x((\mathbf{e}_i)_x, (\mathbf{e}_i)_x) \right\|^2 - 2 R(x) \right),
\]
where \( R(x) \) denotes the scalar curvature at \( x \).  Since the mean curvature vector of \( M \) at \( x \) is defined by
\[
\mathbf{H}(x) = \frac{1}{d} \sum_{i=1}^d \mathsf{B}_x((\mathbf{e}_i)_x, (\mathbf{e}_i)_x),
\]
it follows that
\[
a_1(x) = -(\Delta f)(x) + \frac{f(x)}{4} \left( d^2 \left\| \mathbf{H}(x) \right\|^2 - 2 R(x) \right),
\]
where \( \Delta \) denotes the Laplace–Beltrami operator on \( M \), \( \mathsf{B}_x \) is the second fundamental form, and \( R(x) \) is the scalar curvature at \( x \). This expression agrees with the result established in \cite{HAL}.

For \( a_2(x) \), the result involves the derivatives of various curvature quantities, including the full Riemannian curvature tensor, Ricci curvature, and scalar curvature. Additionally, it incorporates the derivatives of the second fundamental form, which in turn affects the derivatives of the mean curvature. To derive the expression for \( a_2(x) \), one can employ the Taylor expansion of the volume density as articulated in Gray's work, alongside the spherical moment integrals.

\section{Hypersurfaces in Euclidean Spaces and their Equicurved Points}
In this section, we employ the asymptotic expansion of the Gaussian integral operator defined on a compact oriented hypersurface \( M \subset \mathbb{R}^{d+1} \), which is equipped with a global unit normal vector field \( \nu \). This analysis facilitates the classification of points that meet specific curvature criteria, termed \emph{equicurved points}. To lay the groundwork for this exploration, we will begin by reviewing essential concepts related to hypersurfaces in Euclidean space.

The \emph{Gauss map} associated with the oriented hypersurface \( (M, \nu) \) is defined as the smooth map
\(
G : M \to S^d 
\)
given by
\[
G(x) = T_{x,0} \nu(x),
\]
where \( T_{x,0} : T_x \mathbb{R}^{d+1} \to \mathbb{R}^{d+1} \) denotes the parallel translation (i.e., vector translation) from the point \( x \) to the origin. Explicitly, the map \( T_{x, 0} \) sends a tangent vector \( \mathbf{v}_x \in T_x \mathbb{R}^{d+1} \) to the corresponding vector \( \mathbf{v} \in \mathbb{R}^{d+1} \), via the identification \( T_0 \mathbb{R}^{d+1} \cong \mathbb{R}^{d+1} \).  
Accordingly, for each \( x \in M \), the unit normal vector \( \nu(x) \in T_x \mathbb{R}^{d+1} \) satisfies
\(
\nu(x) = (G(x))_x,
\)
where \( G(x) \in \mathbb{R}^{d+1} \cong T_0 \mathbb{R}^{d+1} \) and \( (G(x))_x \in T_x \mathbb{R}^{d+1} \) denotes the corresponding tangent vector at the point \( x \). 

The \emph{shape operator} \( S_x \) at a point \( x \in M \) is the linear map
\(
S_x :\; T_x M \to T_x \mathbb{R}^{d+1}
\)
defined by sending a tangent vector \( \mathbf{v}_x \in T_x M \) to
\[
S_x(\mathbf{v}_x) = -(T_{G(x), x} \circ dG_x)(\mathbf{v}_x).
\]
Here, \( dG_x \; T_x M \to T_{G(x)} S^d \subset \mathbb{R}^{d+1} \) is the differential of the Gauss map \( G \), and \( T_{G(x), x} \) is the translation map that identifies the tangent space \( T_{G(x)} S^d \) with \( T_x M \).

The image of \( S_x \) lies in \( T_x M \), and \( S_x \) is a self-adjoint linear operator with respect to the Riemannian metric induced on \( M \). By the spectral theorem, \( S_x \) admits a complete set of eigenpairs \( \{ (\kappa_i(x), (\mathbf{e}_i)_x) \; 1 \leq i \leq d \} \), where \( \kappa_i(x) \) is an eigenvalue of \( S_x \) with corresponding eigenvector \( (\mathbf{e}_i)_x \). The eigenvalues are ordered as
\[
\kappa_1(x) \geq \kappa_2(x) \geq \cdots \geq \kappa_d(x).
\]
The vectors \( \{ (\mathbf{e}_i)_x \}_{i=1}^d \) form an orthonormal basis of \( T_x M \).

We refer to \( \kappa_i(x) \) as the \( i \)-th principal curvature of \( M \) at \( x \), and to \( (\mathbf{e}_i)_x \) as the corresponding \( i \)-th principal direction.

We define the bilinear form
\(
B_x :\; T_x M \times T_x M \to \mathbb{R}
\)
by
\[
B_x(\mathbf{v}_x, \mathbf{w}_x) = \langle S_x(\mathbf{v}_x), \mathbf{w}_x \rangle.
\]
Then we have
\[
B_x(\mathbf{v}_x, \mathbf{w}_x) = \sum_{i=1}^{d} \kappa_i(x) \langle \mathbf{v}_x, (\mathbf{e}_i)_x \rangle \langle \mathbf{w}_x, (\mathbf{e}_i)_x \rangle
\]
for all \( \mathbf{v}_x, \mathbf{w}_x \in T_x M \). In this setting, the second fundamental form \( \mathbf{B}_x \) at the point \( x \in M \) is given by
\[
\mathbf{B}_x(\mathbf{v}_x, \mathbf{w}_x) = B_x(\mathbf{v}_x, \mathbf{w}_x) \, \nu(x)
\]
where \( \nu(x) \) is the unit normal vector at \( x \). In particular,
\[
\| \mathbf{B}_x(\mathbf{v}_x, \mathbf{w}_x) \|^2 = | B_x(\mathbf{v}_x, \mathbf{w}_x) |^2.
\]

\begin{defn}
Let \( (M, \nu) \) be a smooth oriented hypersurface in \( \mathbb{R}^{d+1} \), and let \( \kappa_1(x), \dots, \kappa_d(x) \) denote the principal curvatures of \( M \) at a point \( x \in M \). The \emph{\( i \)-th mean curvature} of \( M \) at \( x \) is defined as
\[
H_i(x) = \frac{1}{\binom{d}{i}} \, e_i\big(\kappa_1(x), \dots, \kappa_d(x)\big)
\]
where \( e_i \) denotes the \( i \)-th elementary symmetric polynomial in \( d \) variables. Specifically, \( e_i(\kappa_1, \dots, \kappa_d) \) is the sum of all products of \( i \) distinct principal curvatures.

In particular, the (first) \emph{mean curvature} of \( M \) at \( x \), denoted by \( H(x) \), is defined as
\[
H(x) = H_1(x).
\]
\end{defn}

Note that the mean curvature vector of \( M \) at \( x \) is given by
\[
\mathbf{H}(x) = H(x) \, \nu(x),
\]
where \( H(x) \) is the mean curvature and \( \nu(x) \) is the unit normal vector at \( x \).

Note that any hypersurface is locally orientable, meaning that there exists a local unit normal vector field \( \nu \) defined on a neighborhood of each point in \( M \). Consequently, one can define a \emph{local Gauss map} by associating to each point the direction of the corresponding unit normal vector. In particular, the \emph{principal curvatures} of \( M \) can be defined locally as the eigenvalues of the shape operator determined by this local Gauss map.

Let \( (\mathbf{e}_i)_x \) represent the \( i \)-th principal direction of the manifold \( M \) at the point \( x \in M \). The set \( \{ (\mathbf{e}_i)_x \; 1 \leq i \leq d \} \) constitutes an ordered orthonormal basis of the tangent space \( T_x M \). This basis induces a canonical system of normal coordinates centered at the point \( x \). For any \( 0 < \delta < \operatorname{inj}_x(M) \), this basis provides a standard choice for the normal coordinate parametrization
\[
\mathbf{x} :\; B_0(\delta) \to \mathcal{B}_x(\delta)
\]
as previously defined. For any \( \mathbf{v}_{x} = \sum_{i=1}^{d} v_i (\mathbf{e}_i)_x \), the second fundamental form at \( x \) satisfies
\[
B_x(\mathbf{v}_{x}, \mathbf{v}_{x}) = \sum_{i=1}^{d} \kappa_i(x) v_i^2
\]
where \( \kappa_i(x) \) denotes the \( i \)-th principal curvature of \( M \) at \( x \). It follows that
\[
\| \mathbf{B}_x(\mathbf{v}_{x}, \mathbf{v}_{x}) \|^2 = \left( \sum_{i=1}^{d} \kappa_i(x) v_i^2 \right)^2.
\]
Utilizing the spherical moment integrals and the identity \( R(x) = 2 e_2(\kappa_1(x), \dots, \kappa_d(x)) \), where \( e_2 \) denotes the second elementary symmetric polynomial of the principal curvatures, the integral of \( b_{4,1}(\mathbf{v}) \) over the unit sphere \( S^{d-1} \subset \mathbb{R}^d \) is expressed as follows
\begin{align*}
\frac{1}{\omega_d} \int_{S^{d-1}} b_{4,1}(\mathbf{v}) \, d\sigma(\mathbf{v})
&= \frac{-2}{\omega_d} \int_{S^{d-1}} \left( \sum_{i=1}^d \kappa_i(x)^2 v_i^4 + 2 \sum_{1 \leq i < j \leq d} \kappa_i(x) \kappa_j(x) v_i^2 v_j^2 \right) d\sigma(\mathbf{v}) \\
&= -\frac{2}{d(d+2)} \left( 3 \sum_{i=1}^d \kappa_i(x)^2 + 2 \sum_{1 \leq i < j \leq d} \kappa_i(x) \kappa_j(x) \right) \\
&= -\frac{2}{d(d+2)} \left( 3 e_1(\kappa_1(x), \dots, \kappa_d(x))^2 - 4 e_2(\kappa_1(x), \dots, \kappa_d(x)) \right) \\
&= -\frac{2}{d(d+2)} \left( 3 d^2 H^2(x) - 2 R(x) \right).
\end{align*}
Here,\( H(x) \) represents the mean curvature of \( M \) at \( x \), and \( R(x) \) is the scalar curvature at \( x \).
Inserting this expression into the asymptotic expansion results in:
\[
(\mathcal{K}_\varepsilon f)(x) \sim f(x) + \left( -\Delta f(x) + \frac{1}{4} \left( d^2 H^2(x) - 2 R(x) \right) f(x) \right) \varepsilon + \cdots
\]
where \( \Delta \) represents the Laplace–Beltrami operator on \( M \). This outcome aligns with the expression derived in the previous section when we apply the identity \[ \| \mathbf{H}(x) \|^2 = H(x)^2 \] directly.

\begin{defn}
Let \( x \) be a point on a hypersurface \( M \subset \mathbb{R}^{d+1} \). We say that \( M \) is \emph{equicurved at \( x \)} (or that \( x \) is an \emph{equicurved point} of \( M \)) if the following identity holds:
\[
d^2 H(x)^2 = 2 R(x),
\]
where \( H(x) \) denotes the mean curvature and \( R(x) \) the scalar curvature at the point \( x \). Moreover, \( M \) is called an \emph{equicurved hypersurface} if it is equicurved at every point \( x \in M \).
\end{defn}

It follows from this definition that:

\begin{thm}\label{ck}
Let \( M \subset \mathbb{R}^{d+1} \) be a compact hypersurface, and let \( x \in M \). Then \( x \) is an equicurved point if and only if the following limit holds:
\[
\lim_{\varepsilon \to 0^+} \frac{f(x) - (\mathcal{K}_\varepsilon f)(x)}{\varepsilon} = \Delta f(x)
\]
for every smooth function \( f : M \to \mathbb{R} \), where \( \Delta \) denotes the Laplace–Beltrami operator on \( M \).
\end{thm}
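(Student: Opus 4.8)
The strategy is to read off both implications from the first-order term of the asymptotic expansion already established in Section~3, after upgrading that expansion from real-analytic to arbitrary smooth $f$. Specialized to a hypersurface $M \subset \mathbb{R}^{d+1}$ (using $\|\mathbf{H}(x)\|^2 = H(x)^2$), the computation of $a_1(x)$ gives
\[
(\mathcal{K}_\varepsilon f)(x) = f(x) + \left(-\Delta f(x) + \tfrac{1}{4}\left(d^2 H(x)^2 - 2R(x)\right)f(x)\right)\varepsilon + o(\varepsilon),
\]
so that
\[
\frac{f(x) - (\mathcal{K}_\varepsilon f)(x)}{\varepsilon} = \Delta f(x) - \tfrac{1}{4}\left(d^2 H(x)^2 - 2R(x)\right)f(x) + o(1).
\]
Letting $\varepsilon \to 0^+$, the limit exists and equals $\Delta f(x) - \tfrac14\left(d^2 H(x)^2 - 2R(x)\right)f(x)$; it therefore coincides with $\Delta f(x)$ precisely when $\left(d^2 H(x)^2 - 2R(x)\right)f(x) = 0$. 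Both directions of the theorem will follow from this identity.

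The one point that genuinely needs care is that Section~3 derives the expansion under the hypothesis that $f$ is analytic near $x$, whereas the theorem quantifies over all smooth $f$; this is the main obstacle. I would bridge the gap by a jet-reduction argument. Working in geodesic normal coordinates $\mathbf{x} : B_0(\delta) \to \mathcal{B}_x(\delta)$, the far field $M \setminus \mathcal{B}_x(\delta)$ contributes $o(\varepsilon^k)$ for every $k$, so only the local integral matters. Decompose $f \circ \mathbf{x} = P + g$, where $P$ is the degree-two Taylor polynomial of $f \circ \mathbf{x}$ at the origin and $g(\mathbf{s}) = O(|\mathbf{s}|^3)$. The polynomial $P$ is analytic, hence the local integral against $P$ is governed exactly by the Section~3 formulas; moreover the coefficients $a_0$ and $a_1$ there depend only on the $2$-jet of $f \circ \mathbf{x}$ (through $\widetilde{f}_0 = f(x)$ and $\widetilde{f}_2$, the latter producing $\Delta f(x)$), so the $P$-part reproduces the displayed coefficients verbatim. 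For the remainder it suffices to show
\[
\frac{1}{(4\pi\varepsilon)^{d/2}} \int_{B_0(\delta)} e^{-\|\mathbf{x}(\mathbf{s}) - x\|^2 / 4\varepsilon}\, g(\mathbf{s})\, \rho(\mathbf{s}) \, d\mathbf{s} = O(\varepsilon^{3/2}) = o(\varepsilon).
\]
This follows from a Gaussian tail estimate: by Lemma~\ref{tmetric}, $\|\mathbf{x}(\mathbf{s}) - x\|^2 = s^2 + O(s^4) \geq s^2/2$ for small $s$, so the integrand is bounded by $C s^3 e^{-s^2/8\varepsilon}$; passing to polar coordinates and rescaling $s = \sqrt{\varepsilon}\, u$ extracts the factor $\varepsilon^{3/2}$. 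Since $g(0) = 0$, the limit is unchanged upon replacing $f$ by its $2$-jet, which completes the extension to smooth $f$.

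With the limit formula secured, the equivalence is immediate. If $x$ is equicurved, then $d^2 H(x)^2 = 2R(x)$, the correction term vanishes identically in $f$, and the limit equals $\Delta f(x)$ for every smooth $f$. Conversely, if the limit equals $\Delta f(x)$ for every smooth $f$, then $\left(d^2 H(x)^2 - 2R(x)\right)f(x) = 0$ for all such $f$; choosing any $f$ with $f(x) \neq 0$ (for instance $f \equiv 1$, which is analytic and so needs no extension) forces $d^2 H(x)^2 = 2R(x)$, that is, $x$ is equicurved. Thus the substance of the argument is the $O(\varepsilon^{3/2})$ remainder bound of the second paragraph; once it is in place, the theorem is a direct reading of the coefficient $a_1(x)$.
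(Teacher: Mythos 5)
Your argument is correct and, at its core, is the same one the paper intends: Theorem~\ref{ck} is read off from the coefficient $a_1(x)$ of the expansion specialized to hypersurfaces, the forward implication being immediate and the converse obtained by testing against a function with $f(x)\neq 0$. Where you genuinely add something is the jet-reduction step. The theorem quantifies over all smooth $f$, while the Section~3 expansion is derived under the standing hypothesis that $\widetilde f=f\circ\mathbf{x}$ is real analytic with uniformly convergent Taylor series; the paper simply invokes the expansion without closing this gap, whereas your decomposition of $f\circ\mathbf{x}$ into its degree-two Taylor polynomial plus an $O(|\mathbf{s}|^3)$ remainder, together with the $O(\varepsilon^{3/2})$ Gaussian tail bound, closes it correctly, and your observation that $a_0$ and $a_1$ depend only on the $2$-jet of $\widetilde f$ (through $\widetilde f_0$ and $\widetilde f_2$) is exactly right. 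Two small caveats. First, Section~3 also assumes analyticity of the volume density $\rho$ and, implicitly, of $q(\mathbf{s})=\|\mathbf{x}(\mathbf{s})-x\|^2-s^2$, so for a merely smooth compact hypersurface your appeal to ``the Section~3 formulas'' for the polynomial part still inherits those hypotheses; the identical finite-Taylor-with-remainder technique disposes of them (truncate $\rho$ at degree two with an $O(s^3)$ error, and write $e^{-q/4\varepsilon}=1-q/4\varepsilon+O(q^2/\varepsilon^2)$ with $q=O(s^4)$, each extra error again contributing $o(\varepsilon)$ after the rescaling $s=\sqrt{\varepsilon}\,u$), but this should be said rather than left implicit. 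Second, a purely notational point: the paper already uses $g$ for the squared Euclidean distance in normal coordinates, so your Taylor remainder should be given a different name to avoid a clash.
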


This theorem offers a practical and computable criterion for identifying equicurved points on a hypersurface by analyzing the asymptotic behavior of the Gaussian integral operator. Specifically, at an equicurved point, the Gaussian integral operator asymptotically behaves similarly to the heat operator.
These insights naturally prompt several fundamental questions: Can a compact hypersurface in Euclidean space possess an equicurved point? If so, is there a compact hypersurface that is equicurved at every point?
To explore these questions, we will begin by examining some fundamental properties of equicurved points on hypersurfaces.

Note that
\[
d^2 H^2 - 2 R = \left( e_1(\kappa_1, \dots, \kappa_d) \right)^2 - 4 e_2(\kappa_1, \dots, \kappa_d),
\]
where \( e_1 \) and \( e_2 \) denote the first and second elementary symmetric polynomials of the principal curvatures, respectively.
It follows that the identity \( d^2 H^2 - 2 R = 0 \) holds if and only if
\begin{equation}\label{e1e2}
\left( e_1(\kappa_1, \dots, \kappa_d) \right)^2 = 4 e_2(\kappa_1, \dots, \kappa_d).
\end{equation}

This identity allows us to derive a useful geometric consequence concerning minimal or scalar-flat hypersurfaces:

\begin{prop}
Let \( M \subset \mathbb{R}^{d+1} \) be a hypersurface, and let \( x \in M \) be an equicurved point. If \( M \) is minimal at \( x \), i.e., \( H(x) = 0 \), or if \( M \) is scalar flat at \( x \), i.e. \( R(x) = 0 \), then all principal curvatures vanish at \( x \).
\end{prop}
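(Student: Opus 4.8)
The plan is to push everything through the elementary symmetric polynomials $e_1 = e_1(\kappa_1, \dots, \kappa_d)$ and $e_2 = e_2(\kappa_1, \dots, \kappa_d)$ of the principal curvatures, exploiting at the very end the fact that the $\kappa_i$ are \emph{real} (being eigenvalues of the self-adjoint shape operator). By the identity recorded just before \eqref{e1e2}, the equicurved hypothesis $d^2 H(x)^2 = 2R(x)$ is exactly the relation
\[
e_1^2 = 4 e_2.
\]
In the same notation, minimality at $x$ is the statement $H(x) = 0$, i.e. $e_1 = 0$, while scalar-flatness at $x$ is $R(x) = 0$, i.e. $e_2 = 0$.

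First I would observe that, \emph{granted the equicurved relation}, each of the two alternative hypotheses forces both symmetric functions to vanish simultaneously. Indeed, if $e_1 = 0$ then $4 e_2 = e_1^2 = 0$, so $e_2 = 0$; conversely, if $e_2 = 0$ then $e_1^2 = 4 e_2 = 0$, so $e_1 = 0$. Thus the two cases in the statement collapse into the single reduced situation $e_1 = e_2 = 0$, and there is no need to treat them separately beyond this initial remark.

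The key step is then the power-sum (Newton) identity
\[
\sum_{i=1}^d \kappa_i^2 = e_1^2 - 2 e_2,
\]
obtained by expanding $e_1^2 = \bigl(\sum_i \kappa_i\bigr)^2 = \sum_i \kappa_i^2 + 2\sum_{i<j}\kappa_i\kappa_j$. Substituting $e_1 = e_2 = 0$ yields $\sum_{i=1}^d \kappa_i^2 = 0$. Since the principal curvatures are real, every summand $\kappa_i^2$ is nonnegative, and a vanishing sum of nonnegative reals forces $\kappa_i = 0$ for each $i$, which is the desired conclusion.

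There is essentially no analytic obstacle here; the argument is purely algebraic once the geometric hypotheses are rephrased via \eqref{e1e2}. The only points deserving care are conceptual rather than technical: recognizing that the disjunction ``minimal \emph{or} scalar-flat'' is handled uniformly through the single relation $e_1^2 = 4 e_2$, and noting that the final positivity conclusion is legitimate precisely because the shape operator is self-adjoint, so that its spectrum $\{\kappa_i\}$ is real and the $\kappa_i^2$ cannot cancel.
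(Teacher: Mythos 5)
Your argument is correct and is essentially identical to the paper's own proof: both reduce the hypotheses to $e_1^2 = 4e_2$ together with $e_1 = 0$ or $e_2 = 0$, deduce that both symmetric functions vanish, and conclude via $\sum_i \kappa_i^2 = e_1^2 - 2e_2 = 0$. Your explicit remark that the final step uses the reality of the principal curvatures (self-adjointness of the shape operator) is a small clarification the paper leaves implicit, but the route is the same.
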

\begin{proof}
Suppose \( M \) is equicurved at \( x \), so that the identity \eqref{e1e2} holds.

If \( H(x) = 0 \), then \( e_1 = 0 \), and hence \eqref{e1e2} implies \( e_2 = 0 \).

If \( R(x) = 0 \), then \( e_2 = 0 \), and again by \eqref{e1e2}, we conclude that \( e_1 = 0 \).

In either case, we compute:
\[
\sum_{i=1}^d \kappa_i^2 = e_1^2 - 2e_2 = 0,
\]
which implies \( \kappa_i = 0 \) for all \( 1 \leq i \leq d \). Thus, all principal curvatures vanish at \( x \).
\end{proof}

The condition \eqref{e1e2} is also equivalent to the algebraic identity:
\begin{equation}\label{ele3}
\sum_{i=1}^{d} \kappa_i^2 - 2 \sum_{1 \leq i < j \leq d} \kappa_i \kappa_j = 0.
\end{equation}

When \( d = 2 \), the identity simplifies to \( (\kappa_2 - \kappa_1)^2 = 0 \), which implies that \(\kappa_1 = \kappa_2 \). Therefore, for \( d = 2 \), a point \( x \) in the manifold \( M \) is equicurved if and only if it is umbilical. According to a classical result in the differential geometry of surfaces (see, for instance, Do Carmo  \cite{C}), any compact umbilical surface in \( \mathbb{R}^3 \) must be a round sphere. Consequently, any compact equicurved surface in \( \mathbb{R}^3 \) is necessarily a sphere.
We encourage the reader to verify that Theorem~\ref{ck} holds at every point of \( M = S^2 \). This can be easily confirmed through direct computation using the explicit formula for geodesics on the unit sphere \( S^2 \).
On the other hand, it is possible to construct compact nonspherical surfaces that contain umbilical points, and therefore equicurved points. A classical example of this is a spheroid in \( \mathbb{R}^3 \). While a spheroid is not a sphere, it has finitely many umbilical (and thus equicurved) points.
We have therefore answered the previous questions in the case where \( d = 2 \).

When \( d \geq 3 \), an equicurved point of a hypersurface may not be umbilical. Suppose \( x \in M \) is an equicurved point. If \( x \) is also umbilical, then all principal curvatures at \( x \) would be equal, i.e.,
\[
\kappa_1(x) = \kappa_2(x) = \cdots = \kappa_d(x) = \kappa.
\]
Substituting into (\ref{ele3}), we obtain
\[
(d\kappa)^2 = 4 \binom{d}{2} \kappa^2.
\]
This simplifies to
\(
d(2 - d)\kappa^2 = 0.
\)
Since \( d \geq 3 \), it follows that \( k = 0 \). Thus, the point \( x \) must be a flat point of the hypersurface \( M \). We obtain the following result:
\begin{prop}
Let \( d \geq 3 \). Suppose \( M \subset \mathbb{R}^{d+1} \) is a hypersurface and \( x \in M \) is an equicurved point. If \( x \) is also an umbilical point, then all principal curvatures vanish at \( x \); that is,
\[
\kappa_1(x) = \kappa_2(x) = \cdots = \kappa_d(x) = 0,
\]
and hence \( M \) is flat at \( x \).
\end{prop}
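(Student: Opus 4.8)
The plan is to substitute the umbilical hypothesis directly into the algebraic characterization of equicurvature and extract a constraint on the common principal curvature. Since $x$ is umbilical, all principal curvatures coincide, so I write $\kappa_1(x) = \cdots = \kappa_d(x) = \kappa$. The equicurved condition at $x$ has already been reformulated as the purely algebraic identity \eqref{e1e2}, namely $\left(e_1(\kappa_1,\dots,\kappa_d)\right)^2 = 4\,e_2(\kappa_1,\dots,\kappa_d)$, equivalently as \eqref{ele3}. The strategy is simply to evaluate these elementary symmetric polynomials under the umbilical assumption and simplify.

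First I would compute the relevant symmetric functions: under $\kappa_i = \kappa$ for all $i$, one has $e_1 = d\kappa$ and $e_2 = \binom{d}{2}\kappa^2$. Inserting these into \eqref{e1e2} gives $(d\kappa)^2 = 4\binom{d}{2}\kappa^2$, and expanding $\binom{d}{2} = \tfrac{d(d-1)}{2}$ reduces the right-hand side so that the identity becomes $d^2\kappa^2 = 2d(d-1)\kappa^2$. Collecting all terms on one side yields $d(2-d)\kappa^2 = 0$, which isolates the quantity of interest.

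Finally I would invoke the dimension hypothesis to conclude. Because $d \ge 3$, the scalar factor $d(2-d)$ is strictly negative, hence nonzero, so the identity $d(2-d)\kappa^2 = 0$ forces $\kappa^2 = 0$ and therefore $\kappa = 0$. Consequently every principal curvature vanishes at $x$, so $M$ is flat there, as claimed. I do not anticipate any real obstacle: the argument is a one-line algebraic manipulation resting on the already-established equivalence \eqref{e1e2}, and the hypothesis $d \ge 3$ enters only to rule out the degenerate coefficient. It is worth noting that $d = 2$ is precisely the borderline case, where $d(2-d)$ vanishes and the constraint becomes vacuous; this is consistent with the earlier observation that for surfaces every umbilical point is automatically equicurved, so the nontrivial conclusion genuinely requires $d \ge 3$.
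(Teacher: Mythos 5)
Your proposal is correct and follows essentially the same route as the paper: substitute the common umbilical curvature $\kappa$ into the algebraic equicurvature identity $(e_1)^2 = 4e_2$, simplify to $d(2-d)\kappa^2 = 0$, and use $d \geq 3$ to conclude $\kappa = 0$. No gaps.
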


It is also relatively simple to construct hypersurfaces that contain nonflat equicurved points. For example, let \( M \subset \mathbb{R}^4 \) be the graph of the smooth function
\[
f(x_1, x_2, x_3) = \frac{1}{2}(x_1^2 + x_2^2 + 4x_3^2).
\]
This defines a smooth hypersurface in \( \mathbb{R}^4 \). One can verify that the origin \( x = (0, 0, 0, 0) \) is a nonflat equicurved point of \( M \), since the principal curvatures at the origin are
\(\kappa_1 = 4,\) \( \kappa_2 = 1\), and \( \kappa_3 = 1\) which satisfy the equicurvature condition.

Currently, we are not aware of any instances of compact equicurved hypersurfaces in \( \mathbb{R}^{d+1} \) for \( d \geq 3 \). We do not delve into the construction of such examples in this paper, as it falls outside the scope of this paper. Hence, we leave this as an open question for future research.

\section*{Acknowledgements}
The authors gratefully acknowledge the generous support provided by National Cheng Kung University.


\begin{thebibliography}{9}\large

\bibitem{AG} Gray, Alfred:, The volume of a small geodesic ball of a Riemannian manifold.
Michigan Math. J. 20 (1973), 329–344 (1974).


\bibitem{BN} Belkin, M., Niyogi, P.: Laplacian eigenmaps for dimensionality reduction and data representation. Neural Computation 15(6), 1373 – 1396 (2003).

\bibitem{BN1} M. Belkin, P. Niyogi, Laplacian eigenmaps and spectral techniques for embedding and clustering, in: T.G. Dietterich, S. Becker, Z. Ghahramani (Eds.), Adv. Neural Inform. Process. Syst., vol. 14, MIT Press, Cambridge, MA, 2002.

\bibitem{BN2} M. Belkin, P. Niyogi, Towards a theoretical foundation for Laplacian-based manifold methods, in: P. Auer, R. Meir (Eds.), Proc. 18th Conf. Learning Theory (COLT), Lecture Notes Comput. Sci., vol. 3559, Springer-Verlag, Berlin, 2005, pp. 486–500.  MR2203282


\bibitem{BN3} Mikhail Belkin, P. Niyogi: Convergence of Laplacian eigenmaps, NIPS'06: Proceedings of the 20th International Conference on Neural Information Processing Systems
Pages 129 - 136.


\bibitem{C} do Carmo, Manfredo P.(BR-IMPA):Differential geometry of curves and surfaces. Revised and updated second edition of [MR0394451] Dover Publications, Inc., Mineola, NY, 2016. xvi+510 pp.
ISBN:978-0-486-80699-0
ISBN:0-486-80699-5

\bibitem{CS} Coifman, Ronald R.(1-YALE); Lafon, Stéphane(1-YALE).: Diffusion maps.(English summary) Appl. Comput. Harmon. Anal. 21 (2006), no. 1, 5–30.

\bibitem{CLL} R. R. Coifman, S. Lafon, A. B. Lee, +3 , and S. W. Zucker.: Geometric diffusions as a tool for harmonic analysis and structure definition of data: Diffusion maps, Proceedings of the National Academy of Sciences of the United States of
America , May 24, 2005, Vol. 102, No. 21 (May 24, 2005), pp. 7432-7437

\bibitem{DB} Dunson, David B.(1-DUKE-S); Wu, Hau-Tieng(1-DUKE); Wu, Nan(1-DUKE)
Spectral convergence of graph Laplacian and heat kernel reconstruction in $L^{\infty}$ from random samples.(English summary)
Appl. Comput. Harmon. Anal. 55 (2021), 282–336.


\bibitem{F} Folland, Gerald B.(1-WA): How to integrate a polynomial over a sphere. Amer. Math. Monthly 108 (2001), no. 5, 446–448.

\bibitem{HAL} Hein, Matthias(D-MPI-BC); Audibert, Jean-Yves(F-ENPC-TIS); von Luxburg, Ulrike: From graphs to manifolds—weak and strong pointwise consistency of graph Laplacians.(English summary)Learning theory, 470–485.
Lecture Notes in Comput. Sci., 3559 Lecture Notes in Artificial Intelligence



\bibitem{S1} Smolyanov, O. G.(RS-MOSCM); v. Weizsäcker, H.(D-KSRL); Wittich, O.: Brownian motion on a manifold as limit of stepwise conditioned standard Brownian motions.Stochastic processes, physics and geometry: new interplays, II (Leipzig, 1999), 589–602. CMS Conf. Proc., 29 American Mathematical Society, Providence, RI, 2000
ISBN:0-8218-1960-7

\bibitem{S2} Smolyanov, Oleg G.(RS-MOSCM); Weizsäcker, Heinrich v.(D-TUKS); Wittich, Olaf(D-TBNG-MI): Chernoff's theorem and discrete time approximations of Brownian motion on manifolds.(English summary) Potential Anal. 26 (2007), no. 1, 1–29.

\bibitem{S3} Smolyanov, O. G.(RS-MOSCM); v. Weizsäcker, H.(D-KSRL); Wittich, O.
Chernoff's theorem and the construction of semigroups.Evolution equations: applications to physics, industry, life sciences and economics (Levico Terme, 2000), 349–358.
Progr. Nonlinear Differential Equations Appl., 55
Birkhäuser Verlag, Basel, 2003
ISBN:3-7643-0374-3


\bibitem{SA} Singer, A.(1-YALE).: From graph to manifold Laplacian: the convergence rate.(English summary)
Appl. Comput. Harmon. Anal. 21 (2006), no. 1, 128–134.

\end{thebibliography}
\end{document}